\titleformat{\subsection}{\normalfont\large\bfseries}{\thesubsection}{0.7em}{}[]
\titlespacing*{\subsection}{0pt}{2ex plus 0.2ex minus 0.1ex}{1.5ex plus 0.2ex minus 0.1ex}
\titleformat{\subsubsection}[runin]{\normalfont\bfseries}{\thesubsubsection}{0.6em}{}[.]
\titlespacing*{\subsubsection}{0pt}{1.2ex plus 0.2ex minus 0.1ex}{0.6em}
\newtheorem{theorem}{Theorem}
\theoremstyle{remark}
\newtheorem{remark}[theorem]{Remark}
\newcounter{spslist}
\newcommand{\mat}[5]{ \renewcommand{\arraystretch}{#1}
                    \left[\! \begin{array}{cc}
                            #2 & #3 \\
                            #4 & #5 \end{array} \!\right] }
\newcounter{geqncount}
    {\refstepcounter{equation}%
     \setcounter{geqncount}{\value{equation}}%
     \setcounter{equation}{0}%
  }%
    {\setcounter{equation}{\value{geqncount}}}
\newcommand{\eps}{\epsilon}
\newcommand{\Ds}{D_\mathrm{\hspace{-0.7pt}s}}
\newcommand{\Dg}{D_\mathrm{\hspace{-0.7pt}g}}
\newcommand{\FF}{{\mathcal F}}
\newcommand{\EE}{{\mathcal E}}
\newcommand{\BB}{{\mathcal B}}
\newcommand{\MM}{{\mathcal M}}
\newcommand{\UU}{{\mathcal U}}
\newcommand{\ZZ}{\mathbb{Z}}
\newcommand{\RR}{\mathbb{R}}
\newcommand{\PP}{\mathbb{P}}
\newcommand{\CC}{\mathbb{C}}
\newcommand{\TT}{\mathbb{T}}
\newcommand{\VV}{{\mathcal{V}}}
\newcommand{\cT}{{\mathcal{T}}}
\newcommand{\cd}{\hspace{-1.5pt}\cdot\hspace{-2.5pt}}
\newcommand{\NN}{{\mathbb N}}
\newcommand{\calA}{{\mathcal A}}
\newcommand{\calN}{{\mathcal N}}
\newcommand{\scrE}{{\mathscr E}}
\newcommand{\scrF}{{\mathscr F}}
\newcommand{\vol}{\textrm{vol}}
\newcommand{\adj}[1]{#1^\dagger}
\DeclareMathOperator{\tr}{tr}
\DeclareMathOperator{\rk}{rk}
\DeclareMathOperator{\spec}{Spec}
\DeclareMathOperator{\supp}{supp}
\DeclareMathOperator{\diag}{diag}
\begin{document}

\bibliographystyle{plain} 

\begin{center}
{\bf \Large  Algebraic Aspects of Periodic Graph Operators}
\end{center}

\vspace{0.2ex}

\begin{center}
{\scshape \large Stephen P\hspace{-2.5pt}. Shipman \,and\, Frank Sottile} \\
\vspace{1ex}
{\itshape Departments of Mathematics, Louisiana State University and Texas A\&M University}
\end{center}

\vspace{3ex}
\centerline{\parbox{0.9\textwidth}{
{\bf Abstract.}\
A periodic linear graph operator acts on states (functions) defined on the vertices of a graph equipped with a free translation action.
Fourier transform with respect to the translation group reveals the central spectral objects,  Bloch and
Fermi varieties.
These encode the relation between the eigenvalues of the translation group and the eigenvalues 
of the operator.
As they are algebraic varieties, algebraic methods may be used to study the spectrum of the operator.
We establish a  framework in which commutative algebra directly comes to bear on the spectral theory of periodic
operators, helping to distinguish their algebraic and analytic aspects.
We also discuss reducibility of the Fermi variety and non-degeneracy of spectral band edges.
}}

\vspace{3ex}
\noindent
\begin{mbox}
{\bf Key words:}  Periodic operator, graph operator, Bloch variety, Fermi variety, spectral theory, toric variety
\end{mbox}
\vspace{3ex}

\hrule
\vspace{1.1ex}

\section{Introduction} 

Periodic operators on graphs serve as ``tight-binding" models for the quantum mechanics of electrons in crystalline solids.  Their translational symmetry and discrete nature make them naturally amenable to analysis through commutative algebra.
By browsing works such as \cite{DoKuchmentSottile2019a,FaustSottile2023a,FillmanLiuMatos2022,FisherLiShipman2021,GiesekerKnorrerTrubowitz1993}, the reader encounters various aspects of the algebraic nature of these operators and the role of algebra in their spectral analysis.  This article is a systematic algebraic treatment of periodic graph operators that provides a unifying framework and serves as background material with the hope of benefiting efforts to apply commutative algebra to spectral theory.  The article treats only graphs with finite-degree vertices.
When reading, it is helpful but not necessary to have familiarity with periodic operators, whether discrete or differential.  We refer the reader to introductory sections of the references above, plus an overview focusing on differential operators~\cite{Kuchment2016}.

First, we develop algebraic aspects of the Fourier transform with respect to a discrete translational symmetry group acting freely on a set of vertices.
Upon this, we build the theory of operators that commute with translational symmetries, which are by definition the periodic ones.  Then, half of the article is spent on the Bloch and Fermi algebraic varieties, which describe the relation between momentum (eigenvalues of the translation group) and energy (eigenvalues of the operator).  The two most salient issues are reducibility and nondegeneracy, both of which impact the spectral theory of the operator.  Throughout, we pay attention to identifying algebraic versus analytic aspects of the material.
A general background on the algebraic and geometric ideas we use is found in the two books~\cite{CLOI,CLOII} with more specialized
background in~\cite{CLS,ShafI,ShafII}.

\section{$\ZZ^d$ actions and the Fourier transform}

The algebraic manipulations of \S\ref{silly} set the framework for viewing discrete periodic operators in the context of commutative algebra.
They illuminate various points of view and elucidate the roles of algebra and of analysis.
Algebraic and analytic points of view of the Fourier transform are discussed in \S\ref{inversion}.

\subsection{Algebraic structure}\label{silly}

We examine the underlying structure of a set of vertices with a translation group that extends to functions defined on the vertices.

\subsubsection{Translation of vertices}
Let $\{e_1,\dots,e_d\}$ be a basis for the group $\ZZ^d$, with a general element denoted by $n=\sum_{i\in[1,d]}n_i e_i$.
A free action of $\ZZ^d$ on a set $\VV$ of ``vertices" is denoted by
\begin{equation}
  \ZZ^d\!\times\!\VV \,\longrightarrow\,\VV \;::\; (n,x) \longmapsto x\dotplus n,
\end{equation}
and we will also use the notation $x\dotminus n:=x\dotplus -n$.  A free action by $\ZZ^d$ is called a group of shifts or translations.
Let $W\!\subset\!\VV$ be a fundamental domain for the action, that is, $W$ contains one vertex from each $\ZZ^d$ orbit.
Because the action is free, each $x\!\in\!\VV$ has a unique presentation as $x = y\dotplus n$ with $y\in W$ and $n\in\ZZ^d$, giving a bijection
\begin{equation}
    \VV\,\longleftrightarrow\, W\times\ZZ^d \;::\; y\dotplus n \,\longleftrightarrow\, (y,n).
\end{equation}
We assume that $W$ is finite so that the $\ZZ^d$ action is co-finite.
When $|W|=1$, this is the natural action of $\ZZ^d$ on itself by addition, $x\dotplus n = x+n$.
We identify $\VV$ with the disjoint union of $W$ copies of $\ZZ^d$, and $W$  with the $W$ copies of $0\!\in\!\ZZ^d$:
\begin{equation}\label{union}
  \VV \,\longleftrightarrow\,  \displaystyle \bigsqcup_{w\in W}\ZZ^d \;\cong\; \ZZ^d\!\times\!W,
\end{equation}
with $\ZZ^d$ acting by addition on each copy of itself.

Concretely, $\VV$ could be a periodic set of points in~$\RR^d$ generated by a finite set of points $W\!\subset\!\RR^d$ through their shifts by $d$ independent vectors $\{v_j : j\in[1,d]\}$.  In this situation, the $\ZZ^d$ action~is actual addition in~$\RR^d$, 
\begin{equation}\label{concrete}
  x \dotplus n \;:=\;  x +\!\! \sum_{j\in[1,d]}\!n_j v_j.
\end{equation}
A tight-binding model for single-layer graphene has $|W|=2$ with three repeating edges.
\[
  \begin{picture}(86,60)
    \put(0,0){\includegraphics{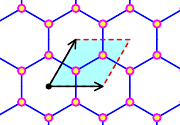}}
    \put(37,32){\small$W$}
  \end{picture}
\]

\subsubsection{Translation of functions}
The group algebra $\CC[\ZZ^d]$ is the complex vector space having $\ZZ^d$ as a basis, and endowed with the natural multiplication
\begin{equation}\label{convolution}
  \sum_m a_mm\cdot \sum_n b_n n
  \;=\;  \sum_{m,n} a_m b_n (m+n) \;=\; \sum_\ell\Big( \sum_m a_m b_{\ell-m} \Big)\ell \;=\; \sum_\ell (a\ast b)_{\hspace{-1pt}\ell}\,\ell\,,
\end{equation}
in which all sums are over $\ZZ^d$ and have only finitely many nonzero terms.
Identifying  $\sum a_n n$ with the function $n\mapsto a_n$ identifies  $\CC[\ZZ^d]$  with the space $F_0(\ZZ^d)$ of complex-valued functions on $\ZZ^d$
having finite support.
By~\eqref{convolution} this identifies the group-algebra multiplication in $\CC[\ZZ^d]$ with convolution in~$F_0(\ZZ^d)$.

We often consider $a=\sum a_n n\in\CC[\ZZ^d]$ as acting on $b=\{b_\ell\}_\ell\in F_0(\ZZ^d)$.
The restriction of this multiplication to the group $\ZZ^d\subset\CC[\ZZ^d]$ is the shift, or translation, action on $F_0(\ZZ^d)$,
\begin{equation}
  (m\cdot b)_\ell \;=\; b_{\ell-m},
\end{equation}
and convolution is a linear combination of shift operators.
These actions on $F_0(\ZZ^d)$ extend to the vector space  $F(\ZZ^d)=\CC^{\ZZ^d}$ of all complex-valued functions on $\ZZ^d$.
Restricting to square-summable functions is a unitary representation of $\ZZ^d$ on~$\ell^2(\ZZ^d)$.

This action is carried to $F(\VV)=\CC^\VV$, and the unitary representation is extended to $\ell^2(\VV)$ by
\begin{equation}
  (n\cd f)(x) \;=\; f(x\dotminus n).
\end{equation}
The identification~\eqref{union} gives various ways of viewing $F(\VV)$ or any sub-vector-space; for example,
\begin{equation}
  \ell^2(\VV) \;\cong\; \ell^2(\ZZ^d)\otimes \CC^W \;\cong\; \ell^2(\ZZ^d,\CC^W)\;\cong\; \ell^2(\ZZ^d)^W.
\end{equation}
The shift operator by the generator $e_j$ of $\ZZ^d$ is also denoted by~$S_j$,
\begin{equation}
  (S_j f)(x) \;=\; f(x\dotminus e_j).
\end{equation}
%

\subsubsection{Formal Fourier transform}
The additive group $\ZZ^d$ is isomorphic to the multiplicative group of monomials $\MM$ in $d$ indeterminates $z=(z_1,\dots,z_d)$,
and the convenient (for our context) isomorphism is
\begin{equation}
  \ZZ^d \,\longleftrightarrow\, \MM \;:\; n \,\longleftrightarrow\, z^{-n},
\end{equation}
in which $z^n = \prod z_i^{n_i}$.  Then one has the identifications
\begin{equation}
  F_0(\ZZ^d) \;\cong\; \CC[\ZZ^d] \;\cong\; \CC[\MM] \;=\; \CC[z^\pm],
\end{equation}
with $\CC[z^\pm]$  the ring of Laurent polynomials in $d$ indeterminates.
Therefore we obtain the identifications
\begin{equation}
  F_0(\VV) \;\cong\; F_0(\ZZ^d)^W \;\cong\; \CC[\ZZ^d]^W \;\cong\; \CC[z^\pm]^W,
\end{equation}
in which the last object is a finitely generated free module over~$\CC[z^\pm]$.

The algebraic isomorphism from $F_0(\ZZ^d)$ (with convolution) to the ring $\CC[z^\pm]$ of Laurent polynomials in $d$~variables is denoted by~$\hat{}$\,,
\begin{equation}\label{fourier1}
  f \;\longmapsto\; \hat f \;=\; \sum_{n\in\ZZ^d} f(n)z^{-n} \;=\; \sum_{n\in\ZZ^d} f(-n) z^n.
\end{equation}
This is a {\itshape formal Fourier transform}, sometimes called the (formal) {\itshape Floquet transform} or {\itshape $z$-transform}.
It is ``formal" as $z$ is an indeterminate rather than a variable to be evaluated at a point.
The extension of the Floquet transform to $F(\ZZ^d)$ yields $\hat f$ in the module $\CC[[z^\pm]]$ of formal Laurent series, and one has
\begin{equation}
  f\in F(\ZZ^d) \text{ has finite support } \iff \hat f  \text{ is a Laurent polynomial}.
\end{equation}

To extend this transform to~$F(\VV)=\CC^\VV$, we fix a fundamental domain $W$ and identify $F(\VV)$ with $F(\ZZ,\CC^W)$:
Consider $f\in F(\VV)$ as a function of $(n,y)\in\ZZ\times W$ through the unique presentation $x=y\dotplus n$; then as a function of the variable $n$,
the object $f(\bullet\dotplus n)$ is an element of~$\CC^W$.
The formula (\ref{fourier1}) still holds, with the values of $f$ being vector rather than scalar,
\begin{equation}\label{fourier2}
  f \;\longmapsto\; \hat f \;=\; \sum_{n\in\ZZ^d} f(\bullet\dotplus n)z^{-n} \;=\; \sum_{n\in\ZZ^d} f(\bullet\dotminus n) z^n.
\end{equation}
This is a Laurent series in the indeterminate $z$ with coefficients in~$\CC^W$.
One can eliminate the need to fix a fundamental domain simply by allowing the argument ``$\bullet$" to run over all of $\VV$ and not just~$W\!$.  This yields a Laurent series in the indeterminate $z$ with coefficients in~$F(\VV)$, 
\begin{equation}\label{floquet2}
  \hat f(z,x) \;=\; \sum_n f(x\dotplus n) z^{-n}.
\end{equation}
This is a sum over the $\ZZ^d$-orbit of the point $x\in\VV$.
As the coefficients are just shifts of~$f$, the formal Floquet transform on $F(\VV)=\CC^\VV$ can be written more compactly~as
\begin{equation}\label{floquet1}
  f \;\;\longmapsto\;\; \hat f \;=\; \sum_{n\in\ZZ^d} n\cd f\, z^n.
\end{equation}
The advantage of retaining $x$ in the Floquet transform is that the action of $n\in\ZZ^d$ on $\hat f(\bullet,z)$ coincides with the action of $z^{-n}\in\MM$ on $\hat f(\bullet,z)$,
\begin{equation}\label{fproperty}
  n\cd\hat f \;=\; z^{-n} \hat f.
\end{equation}

\subsection{$L^2$ Floquet transform and Fourier inversion}\label{inversion}

If the Fourier sum (\ref{fourier1}) converges when evaluated at $z\!\in\!(\CC^\times)^d$, the symbols $z_1,\dots,z_d$ become complex variables rather than formal algebraic symbols, and $\hat f(z,x)$ becomes a function of two arguments.

\subsubsection{Eigenfunctions of the translation group}
If (\ref{fourier1}) converges for some $z\!\in\!(\CC^\times)^d$, then the fundamental property (\ref{fproperty}) of the Floquet transform says that $\hat f(z,x)$, viewed as a function of $x\in\VV$ is an eigenfunction of the $\ZZ^d$ action, with $n$-dependent eigenvalue equal to the homomorphism
\begin{equation}
  \ZZ^d \,\longrightarrow\, \CC^\times \;::\; n \,\longmapsto\, z^{-n}.
\end{equation}
The property can be written equivalently as
\begin{equation}
  \hat f(z,x\dotplus n) \;=\; z^n \hat f(z,x)
  \qquad \forall n\in\ZZ^d\;\;\;\forall z\in(\CC^\times)^d.
\end{equation}
Often this direction of the shift ($f(x\dotplus n)$ instead of $f(x\dotminus n)$) is taken to be the definition of the shift action, so that $z^n$ (instead of $z^{-n}$) is the eigenvalue of the shift by $n\!\in\!\ZZ^d$.

All homomorphisms from $\ZZ^d$ to $\CC^\times$ form the dual group of {\itshape characters} of $\ZZ^d$.
Each character corresponds to a $d$-tuple $z=(z_1,\dots,z_d)$ of nonzero complex numbers, called its {\itshape weight}.
Thus the dual group is isomorphic to the non-compact complex torus~$(\CC^\times)^d$.
The unitary dual group of $\ZZ^d$ is the group of homomorphisms from $\ZZ^d$ to the unit circle $\TT\!\subset\!\CC$,
which is isomorphic to the compact $d$-dimensional torus~$\TT^d\!\subset\!(\CC^\times)^d$.
Reciprocally, $\ZZ^d$ is the dual group of the torus (continuous homomorphisms from $(\CC^\times)^d$ to $\CC^\times$).
The  bihomomorphism $z^n$ captures both directions,
\begin{equation}
  \ZZ^d \times (\CC^\times)^d \;\longrightarrow\; \CC^\times \;::\; (n,z) \longmapsto z^n.
\end{equation}

Denote by $\EE_z$ the eigenspace of the $\ZZ^d$ action on $F(\VV)\cong\ZZ^d\otimes\CC^W$ for the weight~$z^{-1}$.  Because $f(y\dotplus n)=z^n f(y)$, each $f\in\EE_z$ is determined uniquely by its restriction to $W$, and thus, as vector spaces,
\begin{equation}\label{EEiso}
  \EE_z \;\cong\; \CC^W.
\end{equation}
Explicitly, an element $g\in\CC^W$ corresponds to an element $Q(g,z)\in\EE_z$ by
\begin{equation}
  Q(g,z)(n) \;=\; g z^n,
\end{equation}
considered as an element of $\ZZ^d\otimes\CC^W$ (or as an element of $F(\VV)$, one would write $Q(g,z)(v)=g(w)z^n$, where $v=n\dotplus w$ with $w\in W$).  The symbol $Q$ stands for ``quasi-periodic".

For each $z\!\in\!(\CC^\times)^d$, let $\varepsilon_z$ be a distinguished element of $\EE_z$ such that $\varepsilon_z(x)\not=0$ for all~$x\in\VV$.  Then each $f\in\EE_z$ can be written as
\begin{equation}
  f(x) \;=\; \tilde f(x) \varepsilon_z(x).
\end{equation}
The function $\tilde f$ is called the {\itshape periodic part} of $f$ (with respect to the choice of~$\varepsilon_z$).  For example, one can take $\varepsilon_z(y)=1$ for all $y\in W$ so that $\varepsilon_z(y\dotplus n)=z^n$.  When $\VV$ is a periodic subset of $\RR^d$, as in Equation~\eqref{concrete},
let $q=(q_1,\dots,q_d)$ be the dual (momentum) variable so that $k_j=q\cdot v_j$ and $z_j=e^{i q\cdot v_j}$.  Then, for $x\in\RR^d$, one can define $\eps_z(x)=e^{i q\cdot x}$, and restrict this function to $\VV\subset\RR^d$.

\subsubsection{Fourier inversion} 
We have discussed that $F_0(\VV)$ is isomorphic to $\CC[z^\pm]^W$ through the formal Floquet transform.  By evaluating $\hat f$ at actual points $z\mapsto e^{i k}\!\in\!(\CC^\times)^d$, it becomes a function whose restriction to the compact torus $\TT^d$ is a trigonometric polynomial in $k\in\RR^d/(2\pi\ZZ)^d$,
\begin{equation}\label{trigpoly}
  \hat f(e^{i k}) \;=\; \sum_{n\in\supp(f)}\! f(-n) e^{i k\cdot n}
  \qquad \text{(finite sum)}.
\end{equation}
After evaluation on $\TT^d$, $\CC[z^\pm]^W$ becomes the ring of trigonometric polynomial functions $\cT(\TT^d)\otimes\CC^W$ on the compact $d$-torus, which have the form~(\ref{trigpoly}).  We denote the map from $f\in F_0(\VV)$ to $\hat f(e^{i k})$ by~$\UU$,
\begin{equation}
  \UU : F_0(\VV) \;\cong\; F_0(\ZZ^d)\otimes\CC^W \;\longrightarrow\; \cT(\TT^d)\otimes\CC^W.
\end{equation}
$\UU f$ could also be denoted by $\hat f$, with the understanding that a formal polynomial $\hat f(z)$ is being evaluated.

An $L^2$ norm on $\EE_z$ is defined~by 
 $ \|f\|_2^2 = \sum_{y\in W}|f(y)|^2 $.
A fundamental theorem of Fourier analysis says that $\UU$ is an $L^2$ isomorphism,
\begin{equation}
  \sum_n |f(n)|^2 \;=\; \int_{\TT^d} \big|\UU f(z)\big|^2\, d\tilde V(z),
\end{equation}
in which $d\tilde V(z) = d k_1\cdots d k_d/(2\pi)^d$, with inverse given~by
\begin{equation}\label{FInv1}
  f(n) \;=\; \int_{\TT^d}\hat f(n,z)\, d\tilde V(z).
\end{equation}
In terms of the periodic part $\tilde f$ of $\hat f$, this is the familiar inverse Fourier transform with kernel $\varepsilon_z(x)$,
\begin{equation}\label{FInv2}
    f(y\dotplus n) \;=\; \int_{\TT^d}\tilde f(y,z)\,\varepsilon_z(y\dotplus n) \, d\tilde V(z).
\end{equation}

\subsubsection{Completion to $L^2$}
So far, we have defined the formal (algebraic) and analytical Fourier transforms on functions of compact support in~$\VV$,
\begin{equation}
  f\in F_0(\VV) \;\longrightarrow\; \hat f(z) \in \CC[z^\pm]^W \;\longrightarrow\; \UU f\in \cT(\TT^d)\otimes\CC^W.
\end{equation}
Completion in the root-mean-square norm yields the unitary Fourier transform of Hilbert spaces,
\begin{equation}
  \UU : \ell^2(\VV) \longrightarrow L^2(\TT^d)\otimes\CC^W.
\end{equation}
The inversion formulas (\ref{FInv1}) and (\ref{FInv2}) state that the formula (\ref{floquet2}) for the Floquet transform produces the components in all the $\EE_z$ required to build~$f$.
Fourier inversion amounts to writing $\ell^2(\VV)$ as a direct integral over the torus of simpler spaces~\cite[\S{XIII.16}]{ReedSimon1980d},
\begin{equation}\label{directintegral1}
  \ell^2(\VV) \;=\; \int^\otimes_{\TT^d} \EE_z\, d\tilde V\!(z),
  \qquad
  f \;=\; \int^\otimes_{\TT^d} \hat f(\bullet,z) \, d\tilde V\!(z),
\end{equation}
and the transformation $\hat f\mapsto\tilde f$ is a simple gauge transformation that transforms all the fibers $\EE_z$ to~$\EE_{\mathbf{1}}$. 

\def\cprime{$\,'\!$}
\subsubsection{Spec and spectrum}
Evaluation of the formal Floquet transform at values in the torus is analogous to the {\itshape Gel\cprime fand transform} for  Banach algebras.
Evaluation of $\hat f(z)\in\CC[z^\pm]$ at $z\mapsto\zeta=(\zeta_1,\dots,\zeta_d)\in(\CC^\times)^d$ sends the polynomial $\hat f(z)$
to the complex number $\hat f(\zeta)$, which is the representative of the coset $\hat f(z)\,\mathrm{mod}\langle z-\zeta \rangle$ in the field
$\CC[z^\pm]/\langle z-\zeta \rangle\simeq\CC$.
Formally, $\hat f$ becomes a function on the space of maximal ideals in $\CC[z^\pm]$, called $\spec\CC[z^\pm]$, which is homeomorphic to~$(\CC^\times)^d$,
\begin{equation}
  (\CC^\times)^d \;\cong\; \spec\CC[z^\pm]  \;\;::\;\; \zeta \longleftrightarrow \langle z-\zeta \rangle.
\end{equation}
Equivalently, $\spec\CC[z^\pm]$ is the set of ring homomorphisms from $\CC[z^\pm]$ to~$\CC$.
The homomorphism that takes the indeterminate $z_j$ to~$\zeta_j$ for $j\in[1,d]$ has kernel then ideal $\langle z\!-\!\zeta \rangle$
and sends the polynomial $\hat f(z)$ to the number $\hat f(\zeta)$.

As we have seen, translation of compactly supported functions by $n$, that is, the operator $S^n\!=\!\prod_{j\in[1,d]}S_j^{n_j}$ becomes multiplication of Laurent polynomials by $z^{-n}$, which in turn becomes multiplication of trigonometric polynomials on $\TT^d$ by the function~$\zeta^{-n}$.
This means that, in the direct integral~(\ref{directintegral1}), the fiber at $\zeta\!\in\!\TT^d$ of the operator $S^n$, acting on the fiber $\CC^W$ at $\zeta$ is just $\zeta^{-n} I_W$.  Particularly, the elementary shift $S_j$ has fiber at $\zeta$ equal to the coordinate~$\zeta_j$.
This is because the spectrum of $S_j$ is the unit circle, being the range of the $j$-th coordinate function $\zeta_j$ on~$\TT^d$.  In this way, the unitary part of $\spec\CC[z^\pm]$ gives the spectrum of the shift group.
More generally, a polynomial in the elementary shifts, with matrix coefficients,
\begin{equation}\label{periodic0}
  A(S) \;=\; \sum_{n\in\ZZ^d} A_n S^n
\end{equation}
has spectrum equal to the image of the multi-valued eigenvalue function
\begin{equation}
  \sigma(\zeta) \;=\; \left\{ \lambda : \det(A(\zeta)-\lambda)=0 \right\}
\end{equation}
over the compact torus~$\TT^d$.
The Laurent polynomial $\det(A(\zeta)\!-\!\lambda)$ and its zero set predominate the $L^2$ spectral theory of the operator $A(S)$.  An operator that commutes with the shift group is called {\itshape periodic}, and these are discussed in the next section.

\section{Periodic graph operators}\label{sec:periodic}

An operator $A$ acting on functions on $\VV$ is periodic if it commutes with the shift group; such an operator has translational symmetry.
Periodic operators in $\ell^2(\VV)$ have a dichotomy of spectrum: eigenvalues of infinite multiplicity and bands of continuous spectrum.
Eigenvalues are completely described by $A$ as a module endomorphism, while continuous spectrum comes from the Bloch variety, which is the singular set of the matrix $\hat A(z)-\lambda I$.
An interesting exposition of the module homomorphism point of view with extension from $\ZZ^d$ to amenable group actions is presented in~\cite{Kravaris2023a}.

\subsection{On $F_0(\VV)$: Module endomorphism}

A periodic operator $A$ restricted to functions of compact support is a module endomorphism of~$\CC[z^\pm]^W$.

\subsubsection{Definition and notation}
Let $A$ be a linear operator on $F_0(\VV)$, and denote by~$\hat A$ the corresponding operator on $\CC[z^\pm]^W$ as a vector space over $\CC$, through the Floquet transform.  That is, $\hat A$ is defined through
\begin{equation}
  (\hat A \hat f)(z) \;=\; 
                          \widehat{(A f)}(z).
\end{equation}
As $F_0(\VV)\cong F_0(\ZZ^d)\otimes\CC^W$ consists of functions of finite support on $\ZZ^d\!\times\!W$, it is convenient to denote the evaluation of $f\in F_0(\VV)$ at $(n\in\ZZ^d,v\in W)$ by $f_v(n)$, so that $f(n)$ refers to the vector $(f_v)_{v\in W}$ in~$\CC^W$.  Being linear, $A$ is associated with its kernel, or matrix, $a(m,n)$ through
\begin{equation}
  (A f)(m) \;=\; \sum_{n\in\ZZ^d} a(m,n)f(n).
\end{equation}
For each $m,n\in\ZZ^d$, $a(m,n)$ is a linear operator on $\CC^W$ and is thus represented by a matrix $(a_{v w}(m,n))_{v,w\in W}$ indexed by $W$.  This means
\begin{equation}\label{E:operator_first}
  (A f)_v(m) \;=\; \sum_{n\in\ZZ^d} \sum_{w\in W} a_{v w}(m,n)f_w(n).
\end{equation}
The operator $A$ has {\itshape finite range} if there exists a number $R$ such that, for all $m,n\in\ZZ^d$, $a(m,n)\!=\!0$ whenever $|m-n|>R$.
If $a(n,m)=a(m,n)^*$, then $A$~is self-adjoint.
Here, $(\ )^*$ is the adjoint with respect to the standard inner product of~$\CC^W$.

The linear operator $A$ is {\itshape periodic} if, for all $f\in F_0(\ZZ^d)\otimes\CC^W$,
\begin{equation}\label{periodic1}
  (A f)(\bullet\dotplus n) \;=\; A[f(\bullet\dotplus n)](x)
  \qquad\forall n\in\ZZ^d.
\end{equation}
More compactly, this means that $A$ commutes with the shift group~$\ZZ^d$,
\begin{equation}\label{periodic2}
  n\cd A f \;=\; A(n\cd f)
    \qquad\forall n\in\ZZ^d.
\end{equation}
Observe that $A$ is periodic if and only if
\begin{equation}
  a(m,n) \;=\; \tilde a(m-n)
\end{equation}
for some function $\tilde a$, which we will just denote by~$a$.  A~periodic linear operator of finite range has $a(m)=0$ whenever $|m|>R$; denote by $r(A)$ the minimal such $R$\label{Arange}.   The operator is self-adjoint if $a(-m)=a(m)^*$.  This means that $A$ is a convolution operator
\begin{equation}
  (A f)(m) \;=\; \sum_{n\in\ZZ^d}a(m-n) f(n) \;=\; \sum_{n\in\ZZ^d} a(n)f(m-n),
\end{equation}
or, equivalently, a linear combination of translations with matrix coefficients,
\begin{equation}
  A \;=\; \sum_{n\in\ZZ^d} a(n)\, n\cdot \;=\; \sum_{n\in\ZZ^d} a(n)S^n,
\end{equation}
as in~(\ref{periodic0}).
When $W$ consists of only one vertex, $A$ is an element of the group algebra $\CC[\ZZ^d]$ and is sometimes called a Toeplitz operator.  Now identify $f$ with $\sum f(n) n\in\CC[\ZZ^d]^W$ and use (\ref{convolution}) to obtain
\begin{equation}\label{convolution2}
  \sum a(n) n \cdot \sum f(n) n \;=\; \sum (A f) (n) n.
\end{equation}
%

\begin{remark}\label{graphs_emerge}
  A linear operator $A$ on $F_0(\VV)$ corresponds to a directed, labeled graph $\Gamma=(\VV, E)$.
  Identifying $\VV$ with $\ZZ^d\!\times\!W$, there is a directed edge $v{\dotplus}m{\leftarrow}w\dotplus n$ in $E$ with
  label $a_{v w}(m,n)$, when $a_{v w}(m,n)\neq 0$.
  Loops $v\dotplus m \rcirclearrowleft$ are  removed and their labels $a_{v v}(m,m)$ become  a potential
  function $V\colon \VV\to\CC$.
  For an edge $v{\leftarrow}w$, we will write $a_{v\leftarrow w}$ for the corresponding edge label
  Then, for $f\in F_0(\VV)$, $v,w\in\VV$~\eqref{E:operator_first} becomes
  \begin{equation}\label{Eq:graphOperator_first}
    (A f)(v) \;=\; V(v)f(v)\ +\ \sum_{v\leftarrow w} a_{v\leftarrow w} f(w).
  \end{equation}

 When $A$ has finite range, the graph $\Gamma$ is locally finite in that each vertex $v$ has finitely many neighbors
 $\{w\in\VV\mid v\leftarrow w\}$.
 By~\eqref{Eq:graphOperator_first},  $A$ is a Schr\"odinger operator: a potential plus weighted graph
 Laplacian, and it acts on $F(\VV)$.

 If in addition $A$ is $\ZZ^d$-periodic, then $\Gamma$ is a $\ZZ^d$-periodic labeled graph.
 E.g.\ the underlying graph is periodic with $V\colon\VV\to\CC$ and $a\colon E\to \CC$ periodic functions.
 In this case, the operator $A$ commutes with the $\ZZ^d$-action, it is {\itshape periodic}.
 When $a_{v w}(m,n)=a_{w v}(n,m)^*$, the operator $A$ is self-adjoint, and when
 $a_{v w}(m,n)=a_{w v}(n,m)$, we may replace the directed graph by a corresponding undirected graph.

\subsubsection{Examples}
 The simplest example is when $\VV=\ZZ$ and there is an (undirected) edge between $n$ and $n\pm 1$ with weight $-1$,
 so that $\Gamma$ is \includegraphics{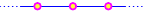} and $A$ becomes
 \begin{equation}\label{Eq:lineGraphOperator}
  (A f)(n)\;=\; -f(n-1) + V(n) f(n) - f(n+1).
 \end{equation}
 Another example is the $\ZZ^2$-periodic hexagonal lattice (graphene), in which $W=\{v,w\}$ and there are (undirected) edges
 connecting $v(=v\dotplus(0,0))$ to each of $w$, $w\dotminus(1,0)$, and $w\dotminus(0,1)$, and this is extended to all of
 $\ZZ^2\!\times\!W$ by periodicity.
 We show the hexagonal lattice with $W$ shaded and the $\ZZ^2$-action indicated by the
 arrows, together with a picture of a labeling of edges in a neighborhood of $W$.
 \begin{equation}\label{Eq:hexagonal_lattice}
   \raisebox{-38pt}{\begin{picture}(115,82)
       \put(0,0){\includegraphics{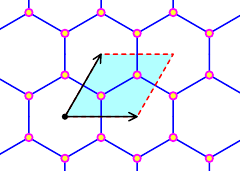}}
       \put(49,44){$W$}
   \end{picture}
   \qquad\qquad
   \begin{picture}(160,82)(-27,3)
     \put(9,4){\includegraphics{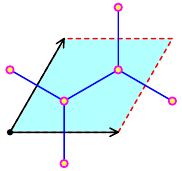}}
     \put(31,31){\small$v$}      \put( 96,35){\small$v\dotplus(1,0)$}     \put(70,81){\small$v\dotplus(0,1)$}
     \put(69,53){\small$w$}      \put(-27,50){\small$w\dotminus(1,0)$}    \put(43,6){\small$w\dotminus(0,1)$}
     \put(68,19){\small$(1,0)$}
     \put(18,70){\small$(0,1)$}
     \put(52,39){\small{\color{blue}$a$}}
     \put(33,15){\small{\color{blue}$c$}}   \put(58,72){\small{\color{blue}$c$}}
     \put(16,41){\small{\color{blue}$b$}}   \put(86,44){\small{\color{blue}$b$}}
   \end{picture}}
 \end{equation}
 The operator $A$ for the hexagonal graph with edge-labeling $a,b,c$ as above is
 \begin{equation}\label{Eq:hexagonal_lattice_operator}
   \begin{array}{rcl}
    (A f)(v\dotplus n) &=& V(v) f(v\dotplus n) \ +\ a f(w\dotplus n) + b f(w\dotplus n \dotminus(1,0)) + c f(w\dotplus n \dotminus(0,1)) \\
    (A f)(w\dotplus n) &=& V(w) f(w\dotplus n) \ +\ a f(v\dotplus n) + b f(v\dotplus n  \dotplus(1,0)) + c f(v\dotplus n  \dotplus(0,1)) 
   \end{array}\ .
 \end{equation}
\end{remark}

\subsubsection{Formal Fourier transform}
Under the formal Fourier (or Floquet) transform $n\mapsto z^{-n}$, translation by $n$ becomes multiplication by $z^{-n}$,
and periodicity becomes ring homomorphism.  More precisely, with
\begin{equation}
  \hat A(z) = \sum a(n) z^{-n}, \qquad
  \hat f(z) = \sum f(n) z^{-n}, \qquad
  \widehat{A f}(z) = \sum (A f)(n) z^{-n},
\end{equation}
formula~\eqref{convolution2} becomes $ \hat A(z) \hat f(z) = (\widehat{A f})\,(z)$.
This is an algebraic form of the general principal that an operator commuting with a group action becomes a multiplication operator under (formal) Fourier transform.

In summary: The space $F_0(\ZZ^d)$, identified with $R\vcentcolon=\CC[z^\pm]$ through the formal Floquet transform, is a vector space over $\CC$ and also a ring (convolution of functions $=$ multiplication of polynomials).
Thus $F_0(\VV)=R^W$ is a free $R$-module.
Consequently, one can study general $\CC$-linear operators on $R^W\!$, or specialized ones that are also $R$-module endomorphisms of~$R^W\!$.
The operator $A$, when pulled over to $R^W\!$, is written~$\hat A$.
The periodicity property~\eqref{periodic2} means that $\hat A$ commutes with multiplication by $z^n$ for all $n\in\ZZ^d$.
This happens if and only if $\hat A$ commutes with multiplication by any $\hat f(z)\in R$; and this is simply the statement that $\hat A$ is a module endomorphism of~$R^W$.
This amounts to the following (un)remarkable result.

\begin{theorem}\label{thm:endomorphism}
  A $\CC$-linear operator $A$ on $F_0(\VV)$ is periodic if and only if $\hat A$ is a module endomorphism of~$\CC[z^\pm]^W$.
\end{theorem}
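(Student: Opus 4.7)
The plan is to exploit the fundamental intertwining property~\eqref{fproperty}, which says that under the formal Floquet transform the action of $n\in\ZZ^d$ is carried to multiplication by $z^{-n}$, and then note that the monomials $\{z^{-n} : n\in\ZZ^d\}$ span $R=\CC[z^\pm]$ as a $\CC$-vector space. The theorem then reduces to translating ``commutes with every shift'' to ``commutes with every Laurent polynomial,'' which is precisely the definition of an $R$-module endomorphism once $\CC$-linearity is in hand.

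Concretely, first I would record that for any $g\in F_0(\VV)$ and any $n\in\ZZ^d$,
\[
  \widehat{\,n\cd g\,}(z) \;=\; z^{-n}\,\hat g(z),
\]
which is exactly~\eqref{fproperty} transported through the Floquet isomorphism $F_0(\VV)\cong R^W$. Applying this with $g=f$ and with $g=A f$, the periodicity identity $n\cd(A f)=A(n\cd f)$ transforms into
\[
  z^{-n}\,(\hat A\hat f)(z)\;=\;\hat A\bigl(z^{-n}\hat f\bigr)(z),
\]
valid for every $\hat f\in R^W$. Thus $A$ is periodic if and only if $\hat A$ commutes with multiplication by $z^{-n}$ for every $n\in\ZZ^d$.

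Next, I would extend this from monomials to all of $R$. Since $\hat A$ is $\CC$-linear and every element of $R$ is a finite $\CC$-linear combination of monomials $z^{-n}$, commutation with each $z^{-n}$ implies commutation with every $\hat f(z)\in R$. This is the content of ``module endomorphism:'' a $\CC$-linear map $\hat A:R^W\to R^W$ that satisfies $\hat A(p\,\hat g)=p\,\hat A(\hat g)$ for all $p\in R$ and $\hat g\in R^W$ is precisely an $R$-module endomorphism.

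The converse is immediate: if $\hat A$ is an $R$-module endomorphism, then in particular it commutes with multiplication by each $z^{-n}$, so the chain of equivalences above is reversible, yielding that $A$ commutes with every shift. There is no real obstacle here; the statement is essentially a dictionary entry. The only point worth care is verifying that $\CC$-linearity of $A$ is preserved by the Floquet transform (which is a $\CC$-linear isomorphism) and that the density of monomials in $R$ is used in the algebraic, not topological, sense, so no analytic approximation is needed.
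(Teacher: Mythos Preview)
Your proposal is correct and follows exactly the route the paper takes: the paragraph immediately preceding the theorem spells out that periodicity of $A$ is equivalent to $\hat A$ commuting with multiplication by every monomial $z^n$, which by $\CC$-linearity is equivalent to commuting with multiplication by every element of $R=\CC[z^\pm]$, i.e., to being an $R$-module endomorphism. The paper itself labels the result ``(un)remarkable,'' and your observation that it is ``essentially a dictionary entry'' captures the same spirit.
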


Any endomorphism $\hat A:R^W\to R^W$ is given by a matrix $\hat a_{v w}(z)$ of elements of $R$ indexed by $W$, that~is
\begin{equation}
  (\hat A p(z))_v \;=\; \sum_{w\in W}\hat a_{v w}(z)p_w(z),
\end{equation}
and one can identify $\hat A(z)$ with its matrix $(\hat a_{v w}(z))_{v,w\in W}$.
For the hexagonal graph, the matrix $\hat{A}(z)$ becomes
 \begin{equation}\label{Eq:hexagonal_Graph_Floquet_Matrix}
   \left(\begin{matrix}     V(v) & a+b x^{-1}+c y^{-1}\\ a+b x+c y & V(w)  \end{matrix}\right)\ ,
 \end{equation}
where $z=(x,y)$ with $x$ corresponding to $(1,0)$ and $y$ to $(0,1)$, where $v$ indexes the first row and column and
$w$ the second row and column, and with the labeling of~\eqref{Eq:hexagonal_lattice}.

\subsubsection{Eigenvalues}\label{sec:eigenvalues}
For $\lambda\in\RR$, the equation $(A-\lambda)f=0$ for $f\in F_0(\VV)$ is equivalent to $(\hat A(z)-\lambda)\hat f(z)=0$,
where $\hat f(z)\in\CC[z^\pm]$.
Thus $\lambda$ is an eigenvalue of $\hat A(z)$.
For all $n\in\ZZ^d$, we have $(\hat A(z)-\lambda)z^n f(z)=0$, so that 
all shifts of $f$ are also eigenvectors of~$A$ and thus $\lambda$ is an eigenvalue of infinite multiplicity.

\subsection{On $F(\VV)$: Floquet eigenspaces}\label{sec:FB}

An operator $A$ of finite range can be applied to functions of infinite support.  Equivalently, if $\hat A(z)$ is a Laurent polynomial in $z=(z_1,\dots,z_d)$ with coefficients that are $W\!\times\!W$ matrices with entries in $\CC$, $\hat A(z)$ can be applied to Laurent series with vector coefficients, that is, to elements of $\CC[[z^\pm]]^W$.

\subsubsection{Action of $A$ on $\EE_\zeta$}
For $\zeta\in(\CC^\times)^d$, nonzero elements of the space $\EE_\zeta$ of quasi-periodic functions have infinite support.  
A periodic operator $A$ is invariant on $\EE_\zeta$, so via the isomorphism $\EE_\zeta\cong\CC^W$ (\ref{EEiso}), the restriction of $A$ to $\EE_\zeta$ induces an operator on $\CC^W$.  This operator is just $\hat A(\zeta)$, the evaluation of $\hat A(z)$ at~$z=\zeta$.  For a function $f\in F(\VV)$, denote $\hat f(z)$ also by $\FF(f)(z)$.  

\begin{theorem}\label{thm:QA}
  Let $A$ be a periodic operator of finite range, and let $g\in\CC^W$ and $\zeta\in(\CC^\times)^d$.  Then
\begin{equation}\label{Eq:thmQA}
  \hat A(z) [\FF Q(g,\zeta)] \;=\; \FF Q(\hat A(\zeta)g,\zeta).
\end{equation}
Equivalently, $A [Q(g,\zeta)]= Q(\hat A(\zeta)g,\zeta)$.
\end{theorem}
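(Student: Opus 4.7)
The plan is to establish the pointwise formulation $A[Q(g,\zeta)] = Q(\hat A(\zeta)g,\zeta)$ first, and then deduce the Floquet-transform version $\hat A(z)[\FF Q(g,\zeta)] = \FF Q(\hat A(\zeta)g,\zeta)$ by applying $\FF$ to both sides, using the intertwining identity $\widehat{Af}(z)=\hat A(z)\hat f(z)$ established in Theorem~\ref{thm:endomorphism} and the surrounding discussion.

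The first step is to observe that although $A$ is a priori defined on $F_0(\VV)$, its finite-range hypothesis means that the convolution formula
\begin{equation*}
  (Af)_v(m) \;=\; \sum_{n\in\ZZ^d}\sum_{w\in W} a_{vw}(n)\, f_w(m-n)
\end{equation*}
contains only finitely many nonzero summands for each $(v,m)$, so $A$ extends canonically to all of $F(\VV)$ and in particular to the quasi-periodic function $Q(g,\zeta)$, which has $Q(g,\zeta)_w(m)=g(w)\zeta^m$ under the identification $\VV\cong W\times\ZZ^d$.

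The core calculation is then a direct substitution. Plugging $f=Q(g,\zeta)$ into the convolution formula and factoring $\zeta^m$ out of the sum gives
\begin{equation*}
  (A Q(g,\zeta))_v(m) \;=\; \zeta^m\sum_{w\in W}\Bigl(\sum_{n\in\ZZ^d} a_{vw}(n)\zeta^{-n}\Bigr) g(w) \;=\; \zeta^m\,(\hat A(\zeta)g)_v,
\end{equation*}
which is precisely $Q(\hat A(\zeta)g,\zeta)_v(m)$. This proves the second form, and (via the intertwining identity) the first form as well.

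The main subtlety, rather than an obstacle, is interpretive: $\FF Q(g,\zeta)$ is a genuinely formal Laurent series in $\CC[[z^\pm]]^W$ with no polynomial representative, so one must verify that the product $\hat A(z)\cdot\FF Q(g,\zeta)$ is meaningful. This is fine because $\hat A(z)$ is a Laurent \emph{polynomial} with matrix coefficients, so each coefficient of the product involves only finitely many multiplications; the intertwining relation then extends unambiguously from $F_0(\VV)$ to the quasi-periodic setting. Apart from this bookkeeping, the proof is a one-line computation.
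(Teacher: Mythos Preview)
Your proof is correct. The only difference from the paper is the order of attack: the paper computes $\hat A(z)[\FF Q(g,\zeta)]$ directly as a formal Laurent series (carrying out the index shift $n\mapsto n+\ell$ in the double sum), whereas you first verify the physical-space identity $A[Q(g,\zeta)]=Q(\hat A(\zeta)g,\zeta)$ by the same one-line convolution calculation and then push it through~$\FF$ via the intertwining relation. Both routes rest on the same core step---recognizing $\sum_n a(n)\zeta^{-n}=\hat A(\zeta)$ inside a finite sum---so this is an organizational variation rather than a genuinely different argument; your version has the mild advantage of isolating the conceptual content (that $A$ acts on $\EE_\zeta$ by $\hat A(\zeta)$) before any formal-series bookkeeping.
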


\begin{proof}
  By definition, one has
\begin{equation}
  Q(g,\zeta)(n) = g\,\zeta^n,
  \qquad
  \FF Q(g,\zeta)(z) = \sum_{n\in\ZZ^d} g\,\zeta^n\,z^{-n}.
\end{equation}
Let $\hat A(z)= \sum_{\ell\in\Lambda}\!A^\ell z^\ell$ for some finite subset $\Lambda$ of~$\ZZ^d$.
A straightforward sequence of equalities yields
\begin{equation}
\begin{aligned}
  \hat A(z)[\FF Q(g,\zeta)] &\;=\; \hat A(z)\Big( \sum_{n\in\ZZ^d} g \zeta^n z^{-n} \Big)\ =\ \sum_{n\in\ZZ^d} \zeta^n \hat A(z)g z^{-n} 
           \ =\ \sum_{n\in\ZZ^d} \zeta^n\sum_{\ell\in\Lambda} A^\ell g z^{\ell-n} \\
   &\;=\; \sum_{\ell\in\Lambda} A^\ell g \sum_{n\in\ZZ^d} \zeta^n z^{\ell-n} \ =\  \sum_{\ell\in\Lambda} A^\ell g \sum_{n\in\ZZ^d} \zeta^{n+\ell}z^{-n} \\
  &\;=\; \sum_{n\in\ZZ^d} z^{-n}\zeta^n\sum_{\ell\in\Lambda} A^\ell \zeta^\ell g \ =\  \sum_{n\in\ZZ^d} z^{-n} \hat A(\zeta)g \zeta^n
           \ \ =\  \FF Q(\hat A(\zeta)g,\zeta)\,. 
        \makebox[10pt][l]{\mbox{\hspace{50pt}}\qedhere}   
\end{aligned}
\end{equation}
\end{proof}

\subsubsection{Floquet-Bloch theorem}
By Theorem~\ref{thm:endomorphism}, operators that are periodic on $\VV$ with respect to the free $\ZZ^d$ action are exactly
those that become multiplication operators under the formal Floquet transform.
That is, $A f$ becomes $\hat A(z)\hat f(z)$, which is matrix-vector multiplication of Laurent polynomials.
Evaluation of these polynomials at a point $\zeta\in\TT^d$ realizes the Fourier transform, that is
\begin{equation}\label{FBthm1}
  (\UU f)(\zeta)\ =\ \hat A(\zeta)\hat f(\zeta).
\end{equation}
%

\subsection{On $L^2(\VV)$: Full spectral resolution}

When extended to $L^2$, the operator $A$ enjoys all the theory of closed operators in Hilbert space.

\subsubsection{Floquet-Bloch theorem revisited}
In $L^2$, Equation~\eqref{FBthm1} is a discrete version of the classical Floquet-Bloch Theorem~\cite[Thm~XIII.97]{ReedSimon1980d}, which represents $A$ as a decomposable operator through the direct integral~(\ref{directintegral1}),
\begin{equation}\label{directintegral2}
  A \;=\; \int^\otimes_{\TT^d} \hat A(z)\, d\tilde V\!(z),
  \qquad
  A f \;=\; \int^\otimes_{\TT^d} \hat A(z) \hat f(\bullet,z) \, d\tilde V\!(z).
\end{equation}
The resolvent of $A$ is represented by
\begin{equation}
    (A-\lambda)^{-1}f \;=\; \int^\otimes_{\TT^d} (\hat A(z)-\lambda I)^{-1} \hat f(\bullet,z) \, d\tilde V\!(z).
\end{equation}
%

\subsubsection{Invertibility and spectrum}\label{sec:invspec}

We do not expect that  $(A-\lambda I)$ is invertible on $F_0(\VV)$.
This is best seen through the Floquet transform:  The matrix operator $(\hat A(z)\!-\!\lambda I_W)$ on $\CC[z^\pm]^W$ is a Laurent polynomial in~$z$, so it takes polynomials to polynomials, but its inverse does not.  Extending $A$ to $\ell^2(\VV)\cong \ell^2(\ZZ^d)\otimes\CC^W$, or, equivalently, extending $\hat A(z)$ to $L^2(\TT^d)\otimes\CC^W$, the inverse as a bounded linear operator~is
\begin{equation}
  (\hat A(z) - \lambda I)^{-1} \;=\; \frac{\adj{A}(z,\lambda)}{D(z,\lambda)},
\end{equation}
whenever this is regular on $\TT^d$.  Here, $\adj{A}(z,\lambda)$ is the adjugate matrix to $\hat A(z)\!-\!\lambda I$ and
\begin{equation}\label{D}
  D(z,\lambda) \;=\; \det (\hat A(z)\!-\!\lambda I).
\end{equation}
The determinant $D(z,\lambda)$ is known as the {\itshape dispersion function} for the operator~$A$.
For those $\lambda$ for which $D(z,\lambda)$ is non-vanishing for all $z\in\TT^d$, the inverse $(\hat A(z) - \lambda I)^{-1}$ exists from $L^2(\TT^d)\otimes\CC^W$ to~$\ell^2(\VV)$.  Thus the resolvent set of $A$~is the open set
\begin{equation}
  \rho(A) \;=\; \left\{ \lambda\in\CC : D(z,\lambda)\not=0\;\forall z\in\TT^d \right\},
\end{equation}
and the spectrum of $A$ is its complement~$\sigma(A)=\CC\!\setminus\!\rho(A)$.

Symmetry of the operator $A$ in the sense that, for all $f,g\in F_0(\VV)$,
\begin{equation}
  (A f,g) \;=\; (f,Ag)
\end{equation}
occurs whenever the matrix for $A$ is Hermitian, that is, $a_{v w}(m,n)=(a_{w v}(n,m))^*$.  The extension of $A$ to $\ell^2(\VV)$ is self-adjoint.  This implies that the matrix $\hat A(z)$ is self-adjoint for all $z\in\TT^d$,
\begin{equation}
  \hat A(z) \;=\; \hat A(z)^* 
  \qquad \forall z\in(\TT^*)^d,
\end{equation}
and this implies the more general reflection property
\begin{equation}\label{reflection1}
  \hat A(\bar z^{-1}) \;=\; \hat A(z)^* 
  \qquad \forall z\in(\CC^\times)^d,
\end{equation}
which can also be checked directly by the construction $\hat A(z)$ from~$A$.
In this situation, the spectrum of $A$ is real, $\sigma(A)\subset\RR$ and, furthermore, $D(z,\lambda)$ is real-valued whenever $z\in\TT^d$ and $\lambda\in\RR$.

\subsubsection{Spectral bands} 
When $A$ is self-adjoint, $D(z,\lambda)=0$ defines a multi-valued real eigenvalue function $\lambda(z)$ on the torus $\TT^d$ with real eigenvalue branches $\lambda_j(k)$ ($z=e^{i k}$), $j\in[1,|W|]$.  The image of all of these branches constitutes the spectrum $\sigma(A)$ of $A$, which consists a union of closed intervals called {\itshape spectral bands}.  The branches of $\lambda$ are called {\itshape band functions}.  Spectral bands are discussed in \S\ref{sec:BlochFermi}.

\subsubsection{Density of states}\label{DoS}
The physical observable known as the density of states can be expressed in terms of the band functions~\cite[Ch.~11]{GiesekerKnorrerTrubowitz1993}.  If $\nu$ is the integrated density of states, the density is
\begin{equation}
  \delta(\lambda) \;=\; \frac{d\nu}{d\lambda}(\lambda) \;=\; \frac{1}{|W|} \sum_{j=1}^{|W|}\int_{\lambda-\lambda_j(k)=0} \frac{d\tilde S}{|\nabla_k \lambda_j|},
\end{equation}
in which $d\tilde S$ is the Euclidean volume of the Fermi surface in $\TT^d$ scaled by $1/(2\pi)^d$.

For periodic operators, there is another formula for the density of states.  For a subset $\VV'\subset\VV$, let $\mathds{1}_{\VV'}$ denote the spatial projection (cutoff) of functions onto $\CC^{\VV'}$, and for a Borel set $B\subset\RR$, let $\chi_B$ denote the spectral projection onto $B$ in the spectral resolution of~$A$.  For integers~$\ell$,
\begin{equation}
  \Lambda_\ell \;=\; [-\ell,\ell]^d \subset \ZZ^d,
\end{equation}
which is empty if $\ell<0$.  The density of states of $A$ is the Borel measure
\begin{equation}
  \delta(B) \;=\; \lim_{\ell\to\infty} \frac{\tr(\chi_B\mathds{1}_{W+\Lambda_\ell})}{\left|W+\Lambda_\ell\right|}.
\end{equation}
Since $A$ is periodic, $\chi_B$ commutes with the shift $S^n$, and thus $S^n\mathds{1}_{\VV'}S^{-n}=\mathds{1}_{\VV'+n}$, making $\chi_B\mathds{1}_{\VV'}$ unitarily equivalent to $\chi_B\mathds{1}_{\VV'+n}$.  Thus $\tr(\chi_B\mathds{1}_{W+\Lambda_\ell})=|\Lambda_\ell|\tr(\chi_B\mathds{1}_W)$, and we obtain
\begin{equation}
  \delta(B) \;=\; \frac{\tr(\chi_B\mathds{1}_W)}{|W|}.
\end{equation}
This is the von Neumann formula for density of states of a periodic operator.
A homological formula based on a free resolution of modules is derived in~\cite{Kravaris2023a}.

\subsection{Eigenfunctions: Module {\itshape vs.} $L^2$}\label{efcns}

Recall the dichotomy mentioned at the beginning of \S\ref{sec:periodic}.  This section shows how $A$ as a module endomorphism, that is, $A$ restricted to $F_0(\VV)$, gives full information on the $L^2$ eigenspaces of~$A$.

\subsubsection{Flat band functions}  
\S\ref{sec:eigenvalues} showed that eigenvalues of $\hat A(z)$ on $\CC[z^\pm]^W$ are of infinite multiplicity. 
Furthermore, for each $\zeta\in(\CC^\times)^d$, the sum of shifts $\sum_{n\in\ZZ^d}f(\bullet\dotminus n)\zeta^n$ (this is the Floquet transform at $z=\zeta$) is a $\lambda$-eigenfunction of $A$ and is $\zeta$-quasi-periodic.  Particularly, $\lambda$ is an eigenvalue of $\hat A(\zeta)$ for each $\zeta\in\TT^d$.  This means that $A$ has a constant, or ``flat", spectral band function at~$\lambda$.

\subsubsection{Module eigenfunctions and $L^2$ eigenfunctions}

In fact, the eigenvalues of $\hat A(z)$ on $\CC[z^\pm]^W$ are exactly all the eigenvalues of $A$ on~$\ell^2(\VV)$.

\begin{theorem}\label{thm:flat}
The following statements are equivalent.
\begin{enumerate}
  \item $\lambda_0$ is an eigenvalue of $\hat A(z)$, that is, there exists nonzero $f(z)\!\in\!\CC[z^\pm]$ such that $(\hat A(z)\!-\!\lambda_0)f(z)=0$ in~$\CC[z^\pm]$.
  \item $\lambda_0$ is an $L^2$ eigenvalue for $A$, that is, there exists $f\in L^2(\VV)$ such that $(A-\lambda_0)f=0$.
  \item $(\lambda-\lambda_0)$ is a factor of $D(z,\lambda)$.
\end{enumerate}
When these statements hold, $\lambda$ is an eigenvalue of $A$ of infinite multiplicity and the $L^2$ eigenspace contains a dense linear subspace of compactly supported eigenfunctions.  The set of eigenvalues of $A$ is finite. 
\end{theorem}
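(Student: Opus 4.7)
I would prove the three-way equivalence by a cycle (1)$\Rightarrow$(2)$\Rightarrow$(3)$\Rightarrow$(1), and handle the remaining assertions separately. The equivalence (1)$\Leftrightarrow$(3) is essentially linear algebra over the integral domain $\CC[z^\pm]$: a nonzero kernel vector $f(z)\in\CC[z^\pm]^W$ forces $\hat A(z) - \lambda_0 I$ to be singular, so $D(z, \lambda_0) \equiv 0$ as a Laurent polynomial in $z$ and thus $(\lambda - \lambda_0)$ divides $D(z, \lambda)$ in $\CC[z^\pm][\lambda]$. Conversely, $D(z, \lambda_0) \equiv 0$ makes $\hat A(z) - \lambda_0 I$ singular over the field of fractions $\CC(z)$, and clearing denominators in a nonzero $\CC(z)$-kernel vector yields the desired Laurent-polynomial kernel element.

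The implication (1)$\Rightarrow$(2) follows by inverse Floquet transform: the compactly supported function $f \in F_0(\VV) \subset \ell^2(\VV)$ corresponding to $f(z)$ is an $A$-eigenfunction at $\lambda_0$ by Theorem~\ref{thm:endomorphism}. For (2)$\Rightarrow$(3), apply the unitary Floquet transform $\UU$ to an eigenfunction $h \in \ell^2(\VV)$ to obtain $\hat h \in L^2(\TT^d)\otimes\CC^W$ satisfying $(\hat A(z) - \lambda_0 I)\hat h(z) = 0$ for a.e.\ $z \in \TT^d$. Since $\hat h$ is nonzero on a set of positive measure, $D(z, \lambda_0)$ vanishes there; being a Laurent polynomial in $z$, it then vanishes identically, yielding (3).

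Infinite multiplicity is immediate from (1): for any nonzero $f(z)$ in the kernel, the family $\{z^n f(z) : n \in \ZZ^d\}$ is $\CC$-linearly independent because $\CC[z^\pm]$ is a domain. Finiteness of the eigenvalue set follows by fixing any $z_0 \in \TT^d$ and noting that each eigenvalue $\lambda_0$ is a root of $D(z_0, \lambda) \in \CC[\lambda]$, a polynomial of degree $|W|$ in $\lambda$, so at most $|W|$ distinct eigenvalues can occur.

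The main obstacle is showing that the compactly supported eigenfunctions form a dense subspace of the full $\ell^2$ eigenspace. My plan is to use Noetherianity of $\CC[z^\pm]$ to obtain a finite generating set $v_1(z), \dots, v_r(z)$ for the kernel module $K \subset \CC[z^\pm]^W$, whose inverse Floquet transforms give compactly supported eigenfunctions. On a Zariski-open dense subset $U \subset (\CC^\times)^d$ (where $\hat A(z) - \lambda_0 I$ attains its generic rank and a suitable minor of selected generators is nonzero), the vectors $v_i(z)$ span $\ker(\hat A(z) - \lambda_0 I)$; the complement has Lebesgue measure zero in $\TT^d$. I would then argue density by duality: if $\hat h(z)$ in the $L^2$ eigenspace is orthogonal to every $p(z)v_i(z)$ with $p$ a trigonometric polynomial, then $\langle \hat h(z), v_i(z)\rangle_{\CC^W} = 0$ for a.e.\ $z$, and since the $v_i(z)$ span $\ker(\hat A(z) - \lambda_0 I) \ni \hat h(z)$ on $U$, this forces $\hat h \equiv 0$ a.e., completing the argument.
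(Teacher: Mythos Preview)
Your proposal is correct but differs from the paper's proof in two substantive ways.

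First, the logical cycle: the paper proves $(1)\Rightarrow(3)\Rightarrow(2)\Rightarrow(1)$, and its step $(2)\Rightarrow(1)$ \emph{relies on} the density of compactly supported eigenfunctions. Your cycle $(1)\Rightarrow(2)\Rightarrow(3)\Rightarrow(1)$ handles $(2)\Rightarrow(3)$ directly via the $L^2$ Floquet transform (vanishing on a set of positive measure forces the Laurent polynomial $D(z,\lambda_0)$ to vanish identically), so your three-way equivalence is established independently of the density statement. This is a cleaner decoupling.

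Second, and more significantly, your density argument is entirely different. The paper follows Kuchment's operator-theoretic proof: for a nontrivial shift-invariant subspace $\EE$ of the eigenspace with projection $P$, one compares $\rk(\mathds{1}_{W+\Lambda_\ell}P)$ and $\rk(\mathds{1}_{W+\partial_\ell}P)$ using the trace identity $\tr(\mathds{1}_{W+\Lambda}P)=|\Lambda|\tr(\mathds{1}_W P)$ and the volume bound $|\partial_\ell|/|\Lambda_\ell|\to 0$, producing a $\phi\in\EE$ supported away from a boundary layer whose cutoff $\mathds{1}_{W+\Lambda_\ell}\phi$ is a compactly supported eigenfunction not orthogonal to~$\EE$. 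Your route instead stays on the Fourier side: take a finite generating set for the kernel module $K\subset\CC[z^\pm]^W$ (Noetherianity), show these generators span the fiberwise kernel on a Zariski-open set of full Haar measure (generic rank plus a nonvanishing minor), and conclude by a duality argument that any $L^2$ eigenfunction orthogonal to all of $K$ must vanish a.e. Both arguments are valid. The paper's trace/cutoff method is what generalizes to amenable group actions (as it notes, citing~\cite{Kravaris2023a}); your module-theoretic argument is arguably more in keeping with the commutative-algebra spirit of the article and makes the $\CC[z^\pm]$-module structure do the work.
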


We prove (1)$\Rightarrow$(3)$\Rightarrow$(2)$\Rightarrow$(1), although (1)$\Rightarrow$(2) is straightforward. 

Statement (1) means that there exists a nonzero Laurent polynomial vector $f(z)\in\CC[z^\pm]^W$ such that $(\hat A(z)-\lambda_0 I)f(z)=0$ in $\CC[z^\pm]^W$.  Let $X$ be the variety on which $f(z)$ vanishes,
\begin{equation}
  X \;=\; \{ \zeta\in(\CC^\times)^d : f_v(\zeta)=0\; \forall v\in W \}.
\end{equation}
For all $\zeta\in(\CC^\times)^d\setminus X$, $f(\zeta)$ is a nonzero vector in $\CC^W$ and $(\hat A(\zeta)-\lambda_0 I)f(\zeta)=0$, so $D(\zeta,\lambda_0)=0$ as $(\CC^\times)^d\smallsetminus X$ is an open dense set.  Therefore $D(z,\lambda_0)=0$ as a Laurent polynomial in~$z$.  This means that $D(z,\lambda)$ has $(\lambda-\lambda_0)$ as a factor.  

Assuming (3), for each $\zeta\in(\CC^\times)^d$ there is a vector $f(\zeta)\in\CC^W$ such that $(\hat A(\zeta)-\lambda_0)f(\zeta)=0$, and the function $f(\zeta)$ can be taken to be continuous on $\TT^d$.
Fourier inversion provides a function $\check f\in \ell^2(\ZZ^d)$ such that $(A-\lambda_0)\check f=0$.

That (2) implies (1) follows from the remarkable fact that the space of compactly supported eigenfunctions is dense in the whole $\lambda$-eigenspace, which is proved below.  To see that the dimension of the eigenspace is infinite, let $f$ be a nonzero compactly supported $\lambda$-eigenfunction, let $s$ be the diameter of the support of $f$ and note that all the shifts of $f$ by $s\ZZ^d$ have mutually disjoint supports and are therefore orthogonal.

\subsubsection{Approximation by compactly supported eigenfunctions}
First, let $\EE$ be a nontrivial shift-invariant subspace of the $\lambda$-eigenspace of $A$, and we will prove that there exists a compactly supported $\lambda$-eigenfunction of $A$ that is not orthogonal to $\EE$, a theorem due to Kuchment~\cite{Kuchment1989a}.  The proof is based on those of~\cite{Kuchment1989a,HiguchiNomura2009a}.  A proof for amenable groups is in~\cite{Kravaris2023a}.

Denote by $P$ the orthogonal projection onto~$\EE$.  For any positive integers $\ell$ and $r$, set $\Lambda\!=\!\Lambda_\ell$ and $\partial=\Lambda_\ell\setminus\Lambda_{\ell-r}$.
The argument in \S\ref{DoS} yields
\begin{equation}\label{scaling}
  \frac{\tr(\mathds{1}_W P)}{|W|} \;=\; \frac{\tr(\mathds{1}_{W+\Lambda} P)}{|W||\Lambda|}.
\end{equation}
The space $\EE$ being nontrivial implies that $\tr(\mathds{1}_WP)>0$.  Since $\dim(\mathrm{im}\mathds{1}_{W+\partial})=|W\!+\!\partial|=|W||\partial|$, we obtain
\begin{equation}\label{one}
  \rk(\mathds{1}_{W+\partial}\EE) \leq |W||\partial|.
\end{equation}
Equation (\ref{scaling}) together with $\|\mathds{1}_{W+\partial}P\|\leq1$ yield
\begin{equation}\label{two}
  |\Lambda| \tr(\mathds{1}_WP) \;=\; \tr(\mathds{1}_{W+\Lambda}P) \;\leq\; \rk(\mathds{1}_{W+\Lambda}P).
\end{equation}
Fix $r$ fixed, observe that $|\partial|/|\Lambda|\to0$ as $\ell\to\infty$.  Since $\tr(\mathds{1}_WP)>0$, one can therefore choose $\ell$ such that
\begin{equation}\label{three}
  |\partial| \;<\; |\Lambda| \frac{\tr(\mathds{1}_WP)}{|W|}.
\end{equation}
Putting~\eqref{one},~\eqref{two}, and~\eqref{three} together yields
\begin{equation}
  \rk(\mathds{1}_{W+\partial}P) \;<\; \rk(\mathds{1}_{W+\Lambda}P).
\end{equation}
Consequently, there exists $\phi\in\EE$ such that $\phi\not\in\ker(\mathds{1}_{W+\Lambda})$ and $\phi\in\ker(\mathds{1}_{W+\partial})$.

Now apply this result with $r\!=\!2r(A)$ (see page~\pageref{Arange}).  Then $\phi$ is a $\lambda$-eigenfunction of $A$, $\phi$ is not identically zero on $\Lambda$, and $\phi$ vanishes on $\partial$.  It follows from the definition of $r(A)$ that the compactly supported function $\psi=\mathds{1}_{W+\Lambda}\phi$ is also a $\lambda$-eigenfunction of~$A$.  Since $(\psi,\phi)=(\psi,\psi)\not=0$, $\psi$ is not orthogonal to~$\EE$.

Denote by $\mathcal{F}$ the closure of the span of all compactly supported $\lambda$-eigenfunctions of~$A$, and let $\EE$ be its orthogonal complement within the $\lambda$-eigenspace of~$A$.  Since both $\mathcal{F}$ and the $\lambda$-eigenspace are shift-invariant, so is~$\EE$.  If $\EE$ is nontrivial, we have proved above that there is a compactly supported eigenfunction that is not orthogonal to $\EE$, which contradicts the definition of~$\EE$.  We conclude that $\mathcal{F}$ is equal to the $\lambda$-eigenspace of~$A$ and that therefore this eigenspace contains the compactly supported $\lambda$-eigenfunctions as a dense subset.

\section{Bloch and Fermi varieties}\label{sec:BlochFermi}

The relation $D(z,\lambda)=0$ is  the dispersion relation for the periodic operator $A$.
It has two fundamental meanings.  One is that if $D(z,\lambda)=0$ for some $z\in\TT^d$, then $\lambda\in\sigma(A)$.
The other is that, for any $\lambda\in\CC$ and $z\in(\CC^\times)^d$, $D(z,\lambda)=0$ implies that $A$ admits a Floquet mode at energy $\lambda$ and weight~$z$.
It defines the Bloch variety whose cross section at a fixed value of $\lambda$ is the Fermi variety at energy~$\lambda$.

\subsection{Definitions}

\subsubsection{The Bloch variety}

The dispersion relation of a periodic graph operator is the zero set of the dispersion function, and it is also known as
the {\itshape Bloch variety}.
The complex Bloch variety is
\begin{equation}
  \BB_{\!A} \;=\; \left\{ (z,\lambda)\in(\CC^\times)^d\times\CC : D(z,\lambda)=0 \right\},
\end{equation}
and the real Bloch variety is
\begin{equation}
  \RR\BB_{\!A} \;=\; \left\{ (z,\lambda)\in\TT^d\times\RR : D(z,\lambda)=0 \right\}.
\end{equation}
Below are three pictures of real Bloch varieties for the hexagonal lattice~\eqref{Eq:hexagonal_lattice}.
\begin{equation}\label{Eq:BlochVars}
  \raisebox{-52pt}{\begin{picture}(110,108)(0,-10)
      \put(0,0){\includegraphics[height=90pt]{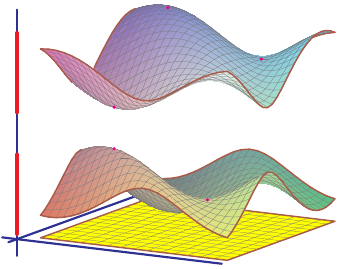}}
      \put(10,-10){$(a,b,c)=(-1,-3,-1)$}
    \end{picture}
    \qquad
 \begin{picture}(112,108)(0,-10)
      \put(0,2){\includegraphics[height=100pt]{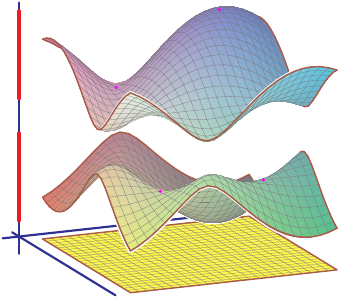}}
      \put(10,-10){$(a,b,c)=(-1,-1,-1)$}
    \end{picture}
    \qquad
   \begin{picture}(112,108)(0,-10)
      \put(0,2){\includegraphics[height=100pt]{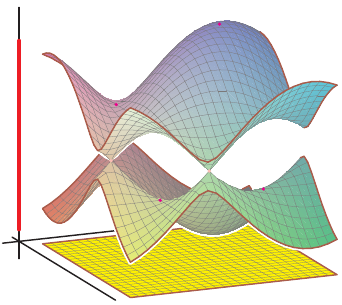}}
      \put(10,-10){$(a,b,c)=(-1,-1,-1)$}
    \end{picture}}
\end{equation}
The third has $V(v)=V(w)$ and is the Bloch variety of the graph Laplacian, later referred to as {\itshape graphene}.
Its two singular points are called Dirac points, and Figure~\ref{F:AA_and_AB} shows some perturbations of it.

\subsubsection{The Fermi variety}

The Fermi variety, or Fermi surface, is a cross section of the Bloch variety at a specific energy~$\lambda$.  For $\lambda\in\CC$, the complex Fermi surface~is
\begin{equation}
  \Phi_{\!A,\lambda} \;=\; \left\{ z\in(\CC^\times)^d : D(z,\lambda)=0 \right\},
\end{equation}
and for $\lambda\in\RR$, the real Fermi surface~is
\begin{equation}
  \RR\Phi_{\!A,\lambda} \;=\; \left\{ z\in\TT^d : D(z,\lambda)=0 \right\}.
\end{equation}

We show three real Fermi surfaces, which are curves in $\TT^2$.
The first two are from the middle Bloch variety in~\eqref{Eq:BlochVars} at $\lambda=1.5$ and $\lambda=1.7$, where the
potential has been normalized so that $V(v)=0$ and $V(w)=1$, so that the three saddle points on the upper branch
are at $\lambda=\frac{1}{2}(1+\sqrt{5})$.
The third is from a different Bloch variety.
They are displayed in the fundamental domain $[-\frac{\pi}{2},\frac{3\pi}{2}]^2$.
\begin{equation}\label{Eq:Fermi}
  \raisebox{-35pt}{%
  \includegraphics[height=80pt]{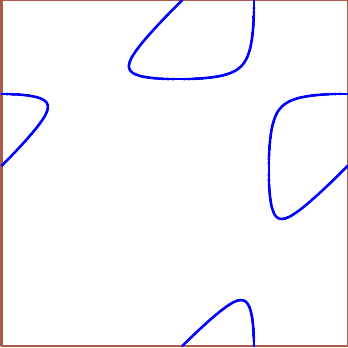}\qquad
  \includegraphics[height=80pt]{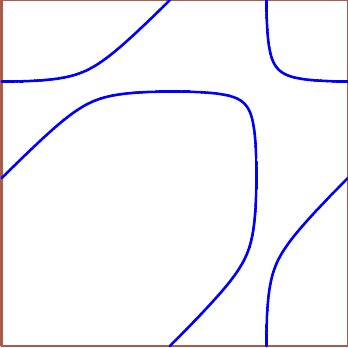}\qquad
  \includegraphics[height=80pt]{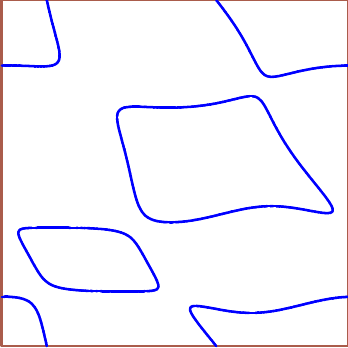}}
\end{equation}

\subsection{Spectrum of $A$}

The spectrum of a discrete periodic operator is determined by its Bloch variety annd its projection onto the real $\lambda$ axis.  The spectrum is intimately connected with the Floquet modes of the operator.  Recall from \S\ref{sec:invspec} that the spectrum of $A$ is the the closed set
\begin{equation}
  \sigma_A \;=\; \left\{ \lambda\in\CC : A-\lambda I \;\text{does not have a bounded inverse in}\; \ell^2(\VV) \right\}.
\end{equation}
%

\subsubsection{Floquet modes}\label{sec:FloquetModes}
Floquet modes were introduced in \S\ref{sec:FB} as eigenfunctions of the shift group~$\ZZ^d$.
Floquet (or Floquet-Bloch) modes for a periodic operator $A$ are simultaneous eigenfunctions of $\ZZ^d$ and~$A$.
For each point $(z,\lambda)$ on the complex Bloch variety, we have a simultaneous eigenfunction of the shift group $\ZZ^d$ and the operator $A$.
That is, a function $f:\VV\to\CC$ such that
\begin{equation}\label{Floquet}
  n\cd f \;=\; z^n f 
  \qquad\text{and}\qquad
  A f \;=\; \lambda f.
\end{equation}
Such a function is called a {\itshape Floquet mode} for energy $\lambda$ and weight $z$, or quasi-momentum $k$, where $z=e^{i k}$.  Floquet modes are not in $\ell^2(\VV)$ because of the first equation.  

\begin{theorem}\label{thm:Floquet}
  Let $\zeta\in(\CC^\times)^d$ and $\lambda\in\CC$ be such that $D(\zeta,\lambda)=0$.  Then there exists a Floquet mode for energy~$\lambda$ and weight~$\zeta$.
\end{theorem}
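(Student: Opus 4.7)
The plan is to reduce the existence of a Floquet mode to a finite-dimensional linear algebra statement about the matrix $\hat A(\zeta)$, and then use Theorem~\ref{thm:QA} to promote a vector in $\CC^W$ to an honest function on $\VV$ with the required quasi-periodicity and eigenvalue properties.

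First I would unpack what a Floquet mode $f$ at $(\zeta,\lambda)$ looks like. The first condition in~\eqref{Floquet}, namely $n\cd f=\zeta^n f$ for all $n\in\ZZ^d$, is exactly the statement that $f\in\EE_\zeta$. By the isomorphism $\EE_\zeta\cong\CC^W$ recorded in~\eqref{EEiso}, every such $f$ is of the form $f=Q(g,\zeta)$ for a unique $g\in\CC^W$, and $f$ is nonzero iff $g\neq 0$. So the task is to produce a nonzero $g\in\CC^W$ such that $A[Q(g,\zeta)]=\lambda\,Q(g,\zeta)$.

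Second, I would invoke Theorem~\ref{thm:QA}, which (in its equivalent form stated in the theorem) gives $A[Q(g,\zeta)]=Q(\hat A(\zeta)g,\zeta)$. Since $Q(\cdot,\zeta):\CC^W\to\EE_\zeta$ is a linear isomorphism, the eigenvalue equation $A[Q(g,\zeta)]=\lambda\,Q(g,\zeta)=Q(\lambda g,\zeta)$ is equivalent to the finite-dimensional equation $\hat A(\zeta)g=\lambda g$, i.e.\ $(\hat A(\zeta)-\lambda I_W)g=0$ in $\CC^W$.

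Finally, the hypothesis $D(\zeta,\lambda)=0$ says by definition~\eqref{D} that $\det(\hat A(\zeta)-\lambda I_W)=0$, so the $W\!\times\! W$ matrix $\hat A(\zeta)-\lambda I_W$ is singular and admits a nonzero kernel vector $g\in\CC^W$. Taking $f:=Q(g,\zeta)$ produces a nonzero function on $\VV$ satisfying both conditions in~\eqref{Floquet}. There is no real obstacle here once Theorem~\ref{thm:QA} is in hand; the only substantive content has already been carried out in proving that theorem, and the present statement is essentially the translation of the determinantal condition into an assertion about the kernel of a $W\!\times\! W$ matrix and back to a function on $\VV$ via the map $Q(\cdot,\zeta)$.
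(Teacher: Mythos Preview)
Your proposal is correct and follows essentially the same approach as the paper's own proof: both start from $D(\zeta,\lambda)=0$ to obtain a nonzero $g\in\CC^W$ in the kernel of $\hat A(\zeta)-\lambda I_W$, invoke Theorem~\ref{thm:QA} to conclude $(A-\lambda I)Q(g,\zeta)=0$, and observe that $Q(g,\zeta)$ is quasi-periodic by definition. The only cosmetic difference is that you use the ``equivalent'' form of Theorem~\ref{thm:QA} (the statement about $A$ directly) while the paper phrases it via the Floquet-transformed version~\eqref{Eq:thmQA}; the content is identical.
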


\begin{proof}
  Since $D(\zeta,\lambda)=0$, there exists $g\in\CC^W$ such that $(\hat A(\zeta)-\lambda I_W)g=0$.
  This makes the right-hand side of~\eqref{Eq:thmQA} vanish, with $\hat A(z)-\lambda I_W$ in place of $\hat A(z)$.
  Therefore $(\hat A(z)-\lambda I_W)[{\cal F} Q(g,\zeta)]=0$, so that
\begin{equation}
  (A-\lambda I) Q(g,\zeta) \;=\; 0.
\end{equation}
For $f=Q(g,\zeta)$, the second equation of (\ref{Floquet}) is satisfied, and the first is satisfied by definition of~$Q$.
\end{proof}

\subsubsection{Spectrum and $D(z,\lambda)$}\label{sec:spectrum}

The spectrum of a periodic operator $A$ has a geometric description:
\begin{equation}
  \sigma(A) \;=\; \left\{ \lambda\in\CC : \Phi_{\!A,\lambda} \cap \TT^d \not= \emptyset \right\}.
\end{equation}

\begin{theorem}
  A number $\lambda\in\CC$ is in the spectrum of $A$ if and only if there exists $\zeta\in\TT^d$ such that $D(\zeta,\lambda)=0$.
\end{theorem}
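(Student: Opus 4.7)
The forward direction is essentially built into the description of the resolvent set given in \S\ref{sec:invspec}: if $\lambda\in\sigma(A)=\CC\smallsetminus\rho(A)$, then by definition of $\rho(A)$ there must exist some $\zeta\in\TT^d$ with $D(\zeta,\lambda)=0$. The only thing to check here is that the stated description of $\rho(A)$ is actually correct, i.e.\ that when $D(z,\lambda)$ is non-vanishing on the compact torus $\TT^d$, the operator $A-\lambda I$ really does have a bounded inverse on $\ell^2(\VV)$. This follows from the direct integral representation~(\ref{directintegral2}): under the unitary $\UU$, $A-\lambda I$ becomes multiplication by $\hat A(z)-\lambda I_W$, and $(\hat A(z)-\lambda I_W)^{-1}=\adj{A}(z,\lambda)/D(z,\lambda)$ is continuous on $\TT^d$ whenever $D(z,\lambda)$ has no zeros there, hence bounded by compactness of $\TT^d$.

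For the reverse direction, assume $\zeta\in\TT^d$ satisfies $D(\zeta,\lambda)=0$. The plan is to produce a Weyl sequence showing $\lambda$ lies in the approximate point spectrum. By Theorem~\ref{thm:Floquet}, there is a Floquet mode $f=Q(g,\zeta)$ with $Af=\lambda f$ and $|f(x)|$ constant on each $\ZZ^d$-orbit (since $|\zeta^n|=1$). I would then localize $f$ by setting $\phi_\ell=\mathds{1}_{W+\Lambda_\ell}f$ and compute $(A-\lambda I)\phi_\ell$. Because $A$ has finite range $r(A)$, the commutator $[A,\mathds{1}_{W+\Lambda_\ell}]$ is supported within a shell of thickness $r(A)$ around $\partial\Lambda_\ell$, so
\begin{equation*}
  \|(A-\lambda I)\phi_\ell\|_2^2 \;\lesssim\; |W|\,|\Lambda_\ell\smallsetminus\Lambda_{\ell-r(A)}|\,\|g\|^2\,\|\hat A\|^2,
\end{equation*}
while $\|\phi_\ell\|_2^2=|\Lambda_\ell|\,\|g\|^2$. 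Since $|\Lambda_\ell\smallsetminus\Lambda_{\ell-r(A)}|/|\Lambda_\ell|\to 0$ as $\ell\to\infty$, the normalized vectors $\phi_\ell/\|\phi_\ell\|_2$ form a Weyl sequence for $A$ at $\lambda$, and hence $\lambda\in\sigma(A)$.

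The main obstacle is making sure the localization argument is clean: one needs to verify that $g\neq 0$ (given by Theorem~\ref{thm:Floquet}), that the boundary-shell estimate really does give a genuine $o(|\Lambda_\ell|)$ bound on $\|(A-\lambda I)\phi_\ell\|_2^2$ (a matter of writing the operator as a finite sum of shifts $\sum_{n\in\Lambda}a(n)S^n$ and noting that $(A-\lambda I)f=0$ pointwise, so only those $x$ for which some neighbor $x\dotplus n$ leaves $W+\Lambda_\ell$ contribute), and that $\|\phi_\ell\|_2$ really grows like $|\Lambda_\ell|^{1/2}$ (which uses $|f|$ being bounded away from zero on its support, a consequence of the quasi-periodicity together with $g\neq 0$ interpreted correctly).

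An alternative, slicker route would bypass the Weyl sequence altogether by working through the direct integral~(\ref{directintegral2}): on a small neighborhood $U\subset\TT^d$ of $\zeta$ on which $|D(z,\lambda)|<\varepsilon$, choose a continuous cutoff and a unit eigenvector $v(z)$ of $\hat A(z)-\lambda I_W$ near $\zeta$, form a test section supported on $U$, and observe that $\|(A-\lambda I)^{-1}\|\geq 1/\varepsilon$. This is more elegant but requires a measurable/continuous selection of near-null-vectors $v(z)$, which is a minor technicality but distracts from the main point; the Weyl sequence plan above is more direct given the Floquet mode already constructed in Theorem~\ref{thm:Floquet}.
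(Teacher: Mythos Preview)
Your proposal is correct and follows essentially the same approach as the paper: the forward direction via boundedness of $(\hat A(z)-\lambda I_W)^{-1}$ on $\TT^d$, and the reverse direction via a Weyl sequence built by truncating the Floquet mode from Theorem~\ref{thm:Floquet} to boxes $W+\Lambda_\ell$. The only minor discrepancy is that the support of $(A-\lambda)\phi_\ell$ is a \emph{two}-sided shell $W+(\Lambda_{\ell+r}\smallsetminus\Lambda_{\ell-r})$ rather than the one-sided $\Lambda_\ell\smallsetminus\Lambda_{\ell-r(A)}$ you wrote (points just outside the box also see neighbors inside), but this is absorbed by your $\lesssim$ and does not affect the argument.
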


Fix $\lambda\in\CC$ and suppose $D(\zeta,\lambda)$ is nonzero for all $\zeta\in\TT^d$.  Then $(\hat A(\zeta)-\lambda)^{-1}$ exists as an analytic matrix-valued function on~$\TT^d$, and therefore is a bounded multiplication operator on~$L^2(\TT^d)$.  By the unitarity of the Fourier transform $\UU:\ell^2(\VV)\to L^2(\TT^d)$, the operator $A-\lambda$ is also invertible.  Now suppose that there exists $\zeta\in\TT^d$ such that $D(\zeta,\lambda)=0$.  Then by Theorem~\ref{thm:Floquet}, there exists a Floquet mode $f:\VV\to\CC$ for energy $\lambda$ and weight~$\zeta$.  For each positive integer $\ell$, define the compactly supported function $f_\ell:\VV\to\CC$~by
\begin{equation}
  f_\ell(x) \;=\; \frac{1}{\|f\|_W |\Lambda_\ell|^{1/2}} \mathds{1}_{W+\Lambda_\ell}(x)f(x),
\end{equation}
in which $\|f\|_W^2=\sum_{x\in W}|f(x)|^2$.  Set $r=2r(A)$ and $\partial_\ell = \Lambda_{\ell+r}\!\setminus\!\Lambda_{\ell-r}$.  For all $x\in\VV$,
\begin{equation}
  \| [(A-\lambda)f_\ell](x) \| \leq \frac{\|A-\lambda\|}{|\Lambda_\ell|^{1/2}},
\end{equation}
If $x\in\VV\!\setminus\!(W+\partial_\ell)$, then $[(A-\lambda)f_\ell](x)=0$.  Therefore,
\begin{equation}
  \|(A-\lambda)f_\ell\|_{\ell^2(\VV)} \;\leq\; \frac{\|A-\lambda\|\sqrt{|W||\partial_\ell|}}{|\Lambda_\ell|^{1/2}}
  \;\longrightarrow 0 \qquad (\ell\to\infty),
\end{equation}
whereas $\|f_\ell\|_{\ell^2(\VV)}\!=\!1$.  This implies that $A-\lambda$ does not have a bounded inverse on~$\ell^2(\VV)$.  The sequence $\{f_\ell\}_{\ell\in\mathbb{N}}$ is known as a Weyl sequence for~$A$.

\subsubsection{Self-adjointness and real structure}

When the operator $A$ is self-adjoint, the real Bloch and Fermi varieties are naturally real algebraic varieties, in a non-standard way.
Indeed, complex conjugation $z\mapsto \overline{z}$ is an involution on $\CC^n$ that is anti-holomorphic in that the induced map on
complex tangent spaces is conjugate-linear.
Its fixed points are $\RR^n$.
More generally, a real structure on a complex space $X$ is an anti-holomorphic involution on $X$.
Its set of fixed points form the real points $X(\RR)$ of this real structure on $X$.
When they are nonempty and include a smooth point of $X$, we have that $\dim_\RR X(\RR) = \dim_\CC X$.

Recall from \S\ref{sec:invspec} that when $A$ is self-adjoint, $\hat A(z)$ satisfies the reflection principle
$\hat A(\bar z^{-1})\!=\!\hat A(z)^*$,
which implies the corresponding identity of $D(z,\lambda)$,
\begin{equation}\label{Eq:D_is_reciprocal}
  D(\bar z^{-1},\bar\lambda) \;=\; \overline{D(z,\lambda)}, 
\end{equation}
for $z\in(\CC^\times)^d$ and $\lambda\in\CC$.

The associated involution $(z,\lambda) \mapsto (\bar z^{-1},\bar\lambda)$ is a non-standard real structure on the
space $(\CC^\times)^d\times\CC$ with real points $\TT^d\times\RR$.
It is non-standard as it differs from the usual structure $(z,\lambda)\mapsto(\bar z,\bar\lambda)$.
By~\eqref{Eq:D_is_reciprocal}, this restricts to a real structure on the complex Bloch variety $\BB_{\!A}$ and for
$\lambda\in\RR$, on the 
complex Fermi variety $\Phi_{A,\lambda}$.
Their real points are the real Bloch variety  $\RR\BB_{\!A}$ and real Fermi variety, respectively.

Thus the real Bloch variety has dimension $d$ in $\TT^d\times \RR$, and not $d{-}1$, which is what one expects from a
na\"ive dimension count: 
The complex Bloch variety has real codimension 2 in its ambient $(\CC^\times)^d\times\CC$, implying that the real Bloch
variety should have 
codimension 2 in $\TT^d\times\RR$, and hence dimension $d{-}1$.
Similarly, a real Fermi variety has dimension $d{-}1$ and not the na\"ive dimension count of $d{-}2$.
Moreover, the real Bloch variety determines and is determined by the complex Bloch variety, together with its non-standard
real structure.

There are further constraints imposed by $A$ being self-adjoint.
The property~\eqref{Eq:D_is_reciprocal} implies that the terms of the polynomial  $D(z,\lambda)$ come in pairs of the
form $a z^n\lambda^m+\bar a z^{-n}\lambda^m$ for some $a\in\CC$, $n\in\ZZ^d$, and $m\in\NN$.
Thus, for fixed $\zeta\in\TT^d$, $D(\zeta,\lambda)$ is a polynomial in $\lambda$ with real coefficients.
This only implies that set of roots of $D(\zeta,\lambda)=0$ are stable under complex conjugation.
In fact, as $A(\zeta)$ is hermitian, all these roots are real.

\subsubsection{Floquet sheaves and modules}

Floquet modes of~\S\ref{sec:FloquetModes} have an algebraeo-geometric interpretation,
The projection map $\scrE\vcentcolon=\bigl( (\CC^\times)^d\times\CC\bigr) \times \CC^W\to  (\CC^\times)^d\times\CC$ exhibits $\scrE$
as a (trivial) vector bundle with fiber $\CC^W$.
The characteristic matrix $\hat{A}(z)-\lambda I_W$ is an endomorphism of $\scrE$.
When $D(z,\lambda)\neq 0$, it is an isomorphism of the fiber $\CC^W$ of $\scrE$ at the point $(z,\lambda)$,
and when $D(z,\lambda)= 0$,  Theorem~\ref{thm:Floquet} implies that the  Floquet modes at $(z,\lambda)$ are exactly the
kernel of  $\hat{A}(z)-\lambda I_W$.
In the language of algebraic geometry~\cite{ShafII}, the Floquet modes form the kernel sheaf $\scrF\subset \scrE$ of the endomorphism
$\hat{A}(z)-\lambda I_W$ of $\scrE$, and the support of this Floquet sheaf $\scrF$ is the Bloch variety.
In the examples we give, the fibers of $\scrF$ at smooth points of $\BB_{\!A}$ are 1-dimensional, and at the singular (Dirac) points
in~\eqref{Eq:BlochVars}, they have dimension 2.

The matrix $\hat{A}(z)-\lambda I_W$ of polynomials in $\CC[z^\pm,\lambda]$ is also an endomorphism of the free  
$\CC[z^\pm,\lambda]$-module  $\CC[z^\pm,\lambda]^W$.
The counterpart of the Floquet sheaf $\scrF$ is the kernel of this map  $\hat{A}(z)-\lambda I_W$, which is also a module for
the ring $\CC[z^\pm,\lambda]$ (for more on modules over commutative rings, see~\cite[Ch.~5]{CLOII}).
By the dictionary between affine algebraic varieties and finitely generated commutative $\CC$-algebras, these two perspectives,
kernel sheaf and kernel module, are equivalent ways to view the same object.
This module-theoretic point of view is due to Kravaris~\cite{Kravaris2023a}, who reinterpreted work of Kuchment~\cite{Kuchment1989a}.
Kravaris gave new results on the density of states using free resolutions for general periodic operators, generalizing
earlier work of Gieseker, Kn\"{o}rrer, and Trubowitz~\cite{GiesekerKnorrerTrubowitz1993} for the square lattice $\ZZ^2$.

\subsection{Reducibility and defect eigenvalues}

The reducibility of the Bloch and Fermi varieties has spectral consequences, notably concerning the existence of embedded eigenvalues caused by local defects, as studied by Kuchment and Vainberg~\cite{KuchmentVainberg2006}.  Since a periodic operator is well defined without reference to a specific fundamental domain $W$, it stands to reason that reducibility is independent of the choice of fundamental domain $W$.  We omit an analysis of this point.

\subsubsection{Components of the Bloch variety}
As an element of $\CC[z^\pm,\lambda]$, $D(z,\lambda)$ has a decomposition into irreducible elements that are unique up to multiplication by
constants and monomials in $\CC[z^\pm]$,
\begin{equation}\label{Bfactorization}
  D(z,\lambda) \;=\; F(\lambda) \prod_{j=1}^\alpha \tilde D_j(z,\lambda).
\end{equation}
All the factors that are polynomials in $\lambda$ alone are incorporated into 
  $F(\lambda) = \prod_{\ell=1}^\gamma (\lambda-\lambda_\ell)^{p_\ell}$,
with the real numbers $\lambda_\ell$ being distinct.  We do not require the $\tilde D_j$ to be distinct.
Each factor $\tilde D_j(z,\lambda)$ corresponds to a component of the Bloch variety
$\BB_{\!A}$,
\begin{equation}
  \BB_{\!\!A}^j \;=\; \left\{ (z,\lambda) \in(\CC^\times)^d\times\CC : \tilde D_j(z,\lambda)=0 \right\}.
\end{equation}
Each of these components contributes a set of spectral bands and gaps to the spectrum of~$A$,
\begin{equation}
  \tilde\sigma_j(A) = \Big\{ \lambda\in\RR : \exists z\in\TT^d,\,(z,\lambda)\in \BB_{\!\!A}^j \Big\},
\end{equation}
and each number $\lambda_\ell$ is an eigenvalue of~$A$ of infinite multiplicity.  Thus,
\begin{equation*}
     \sigma(A) \;=\; \{\lambda_\ell\}_{\ell=1}^\gamma \cup\bigcup_{j=1}^\alpha \tilde\sigma_j(A).
\end{equation*}
%

Roots $\lambda_\ell$ of $F(\lambda)$ are flat bands of the Bloch variety.
The Bloch variety of the  Lieb lattice (on the left below) has a flat band when the potentials at the vertices of
degree 2 are equal.
 \begin{equation}\label{Eq:Lieb}
  \raisebox{-40pt}{\includegraphics[height=95pt]{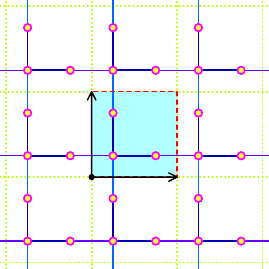}\qquad
  \includegraphics[height=95pt]{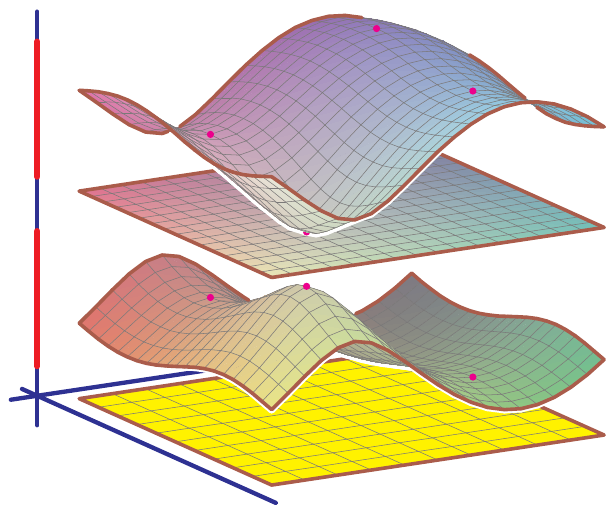}}
 \end{equation}
%

\subsubsection{Components of the Fermi variety}  Let $\lambda_0\!\in\!\CC$ be fixed.  The factorization of the $z$-dependent factors of $D(z,\lambda)$ in (\ref{Bfactorization}) persists for $D(z,\lambda_0)$ as an element of $\CC[z^\pm]$, but the factorization may possibly be refined further, as each factor $\tilde D_j(z,\lambda_0)$ has its own factorization in $\CC[z^\pm]$,
\begin{equation}\label{Ffactorization}
  D(z,\lambda_0) \;=\; F(\lambda_0) \prod_{\ell\in[1,\beta]} D_\ell(z,\lambda_0),
\end{equation}
with $\beta\geq\alpha$.
Each $\tilde D_j(z,\lambda_0)$ is a product of some of the~$ D_\ell(z,\lambda_0)$.
Note that the number $\beta$ of factors is taken to be independent of~$\lambda_0$, and that this factorization is
meaningful even if $F(\lambda_0)\!=\!0$.
One can prove that either $D(z,\lambda_0)$ is reducible for all $\lambda_0\!\in\!\CC$ or it is reducible for only a
finite set of values of $\lambda_0$ (this is known as Hilbert's Irreducibility Theorem).
For example, the pure Schr\"odinger operator on the graph below has an irreducible Bloch variety, but its Fermi variety at
the energy of its Dirac points is highly reducible and is situated on the Bloch variety in a very interesting way.
 \begin{equation}\label{Eq:Dirac_4}
  \raisebox{-40pt}{\includegraphics[height=95pt]{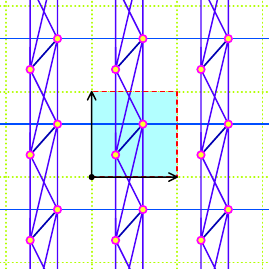}\qquad
  \includegraphics[height=95pt]{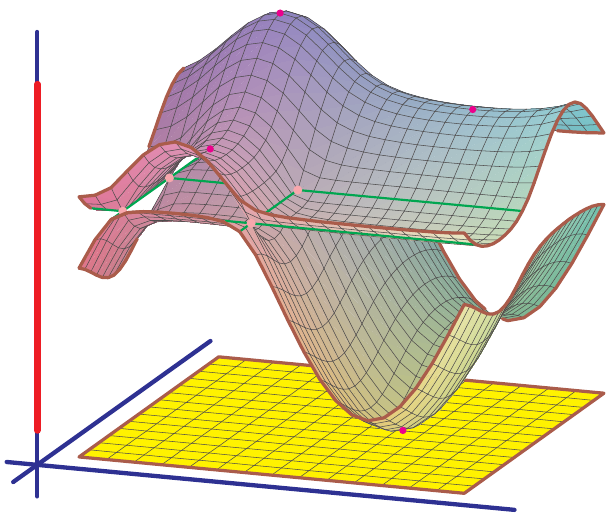}}
 \end{equation}
Each factor $D_j(z,\lambda)$ corresponds to a component of the Fermi variety~$\Phi_{\!A,\lambda_0}$,
\begin{equation}
  \Phi_{\!\!A,\lambda_0}^j \;=\; \left\{ z\in(\CC^\times)^d : D_j(z,\lambda_0)=0 \right\}.
\end{equation}
Each of these components contributes a set of spectral bands and gaps to the spectrum of~$A$, which refine the bands~$\tilde\sigma_j(A)$,
\begin{equation}
  \sigma_j(A) = \Big\{ \lambda_0\in\RR : \TT^d\cap\Phi_{\!\!A,\lambda_0}^j \not= \emptyset \Big\}.
\end{equation}

\subsubsection{$L^2$ response to source in the continuum}\label{sec:response}
We may view $\hat A(z)$ as a self-adjoint analytic family over $\TT^d$ of operators on~$\CC^W$.  Fix $\lambda\in\RR$, and consider the equation
\begin{equation}\label{feqn}
  (A-\lambda)u \;=\; f
  \qquad \text{in\, $\ell^2(\VV)$}.
\end{equation}
Under the Fourier transform, this becomes  $(\hat A(z)-\lambda I)\hat u(z)\!=\!\hat f(z)$ on $L^2(\TT^d,\CC^W)$, and hence
\begin{equation}\label{fsoln}
  \hat u(z) \;=\; \frac{R(z,\lambda)\hat f(z)}{D(z,\lambda)},
\end{equation}
in which $R(z,\lambda)$ is the adjugate of $(\hat A(z)-\lambda I)$, a Laurent polynomial in $z$ with coefficients in~$\CC^{W\times W}$.  Note that $u\in\ell^2(\VV)$ if and only if $\hat u \in L^2(\TT^d,\CC^W)$, and this happens when the right-hand side of (\ref{fsoln}) is regular on~$\TT^d$, that is, the numerator cancels the zero set of the denominator.

Let $D(z,\lambda)=\Dg(z,\lambda)\Ds(z,\lambda)$, where $\Dg(z,\lambda)$ is the product of all factors $D_j$ of $D$ whose zero sets do not intersect $\TT^d$ and $\Ds(z,\lambda)$ has the rest of the factors in (\ref{Ffactorization}).  The real variety of $\Dg$ in $\TT^d$ is therefore empty.  The subscripts refer to ``gap" and ``spectrum".  The key observation is that, in order for for $R(z,\lambda)\hat f(z)$ to cancel the zero set of $\Ds(z,\lambda)$ on $\TT^d$, it must cancel $\Ds(z,\lambda)$ identically as a Laurent polynomial in~$z$.
The proof of the following theorem is contained in the proof of~\cite[Theorem~6]{KuchmentVainberg2006}.

\begin{theorem}
  In order for $R(z,\lambda)\hat f(z)/D(z,\lambda)$ to be regular on $\TT^d$, each vector component of $R(z,\lambda)\hat f(z)$ must have $\Ds(z,\lambda)$ as a factor.
\end{theorem}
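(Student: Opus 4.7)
\medskip

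The plan is to convert the analytic regularity condition on $\TT^d$ into algebraic divisibility in $\CC[z^\pm]$ by exploiting that each irreducible factor of $\Ds(z,\lambda)$ whose complex vanishing locus meets $\TT^d$ has a real locus of the ``correct'' dimension, hence Zariski-dense in its complex locus.

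First I would reduce to $\Ds$ alone: since $\Dg(z,\lambda)$ is non-vanishing on $\TT^d$ by construction, regularity of $R(z,\lambda)\hat f(z)/D(z,\lambda)$ on $\TT^d$ is equivalent to regularity of $R(z,\lambda)\hat f(z)/\Ds(z,\lambda)$ there, which in turn forces each of the $|W|$ Laurent polynomial components of $R(z,\lambda)\hat f(z)$ to vanish on the real variety
\[
   V_\RR(\Ds) \;=\; \{\, z \in \TT^d : \Ds(z,\lambda)=0 \,\}.
\]
Next I would factor $\Ds(z,\lambda) = \prod_{j} D_j(z,\lambda)$ into its distinct irreducible factors in $\CC[z^\pm]$. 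By definition of $\Ds$, each complex hypersurface $V(D_j)\subset(\CC^\times)^d$ meets $\TT^d$. Invoking the non-standard real structure $(z,\lambda)\mapsto(\bar z^{-1},\bar\lambda)$ from self-adjointness described earlier in the paper, the real points $V(D_j)\cap\TT^d$ of the irreducible complex $(d{-}1)$-dimensional variety $V(D_j)$ have real dimension $d{-}1$, matching the complex dimension, so $V(D_j)\cap\TT^d$ is Zariski-dense in $V(D_j)$.

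Then I would invoke a standard algebraic geometry argument: a Laurent polynomial in $\CC[z^\pm]$ vanishing on a Zariski-dense subset of the irreducible variety $V(D_j)$ vanishes identically on $V(D_j)$, and by the Nullstellensatz together with irreducibility of $D_j$, this forces $D_j$ to divide the polynomial. Applying this componentwise to $R(z,\lambda)\hat f(z)$ gives divisibility by each $D_j$; since the distinct $D_j$ are pairwise coprime in the UFD $\CC[z^\pm]$, their product $\Ds(z,\lambda)$ divides each component, as claimed.

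The main obstacle is the density step in the second paragraph: one must know that $V(D_j)\cap\TT^d$ is not entirely confined to the singular locus of $V(D_j)$, so that the local real dimension really equals $d{-}1$ at some point and the set is Zariski-dense. This rests entirely on the fixed-point description of the non-standard real structure coming from $\hat A(\bar z^{-1}) = \hat A(z)^*$ and the general principle that the real points of a real structure on a smooth irreducible complex variety, when nonempty at a smooth point, form a real manifold of matching dimension. The remaining steps (reduction to $\Ds$, Nullstellensatz, coprimality) are routine once the density is in hand.
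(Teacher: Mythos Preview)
The paper does not give its own proof of this statement; it simply says the proof is contained in that of Theorem~6 in Kuchment--Vainberg~\cite{KuchmentVainberg2006}. Your outline captures the right algebraic mechanism, but there are two genuine gaps beyond what you flag.

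The minor one: your last step asserts that the product of the \emph{distinct} irreducible factors of $\Ds$ equals $\Ds$. This fails whenever $\Ds$ has repeated factors, which the paper does not exclude (it explicitly allows repeated factors in the analogous Bloch factorization~\eqref{Bfactorization}). The Nullstellensatz step only delivers divisibility by the radical of $\Ds$. The fix is to iterate---divide out and repeat the boundedness argument---but this must be said, and it requires knowing that boundedness of the quotient persists, which needs a word about orders of vanishing on $\TT^d$.

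The more serious one, which you only partially flag: invoking the real structure on $V(D_j)$ presupposes that the involution $z\mapsto\bar z^{-1}$ preserves each irreducible component $V(D_j)$ individually. The paper only asserts that this involution preserves the whole Fermi variety $\Phi_{A,\lambda}$; it may permute the components. If the involution sends $V(D_j)$ to a different component $V(D_{j'})$, then every fixed point of the involution in $V(D_j)$ (i.e., every torus point of $V(D_j)$) lies in $V(D_j)\cap V(D_{j'})$, which has complex dimension at most $d{-}2$. In that case $V(D_j)\cap\TT^d$ is \emph{not} Zariski-dense in $V(D_j)$, and your divisibility argument collapses. A model of the obstruction is $D_j=z_1+z_2-2$ and $D_{j'}=z_1^{-1}+z_2^{-1}-2$: the involution swaps them and each meets $\TT^2$ only at the single point $(1,1)$, so vanishing there gives no divisibility. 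Your acknowledged worry about smooth points is downstream of this: you first need the real structure to restrict to $V(D_j)$ at all. Closing this gap likely requires analytic input about the band functions near $\TT^d$ (as in the Kuchment--Vainberg argument) rather than the purely algebraic density claim you propose.
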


Na\"ively, this factorization is accomplished if $\hat f(z)\!=\!\Ds(z,\lambda)\hat f_0(z)$ for some $f_0(z)\in\CC[z^\pm]^W$.  But additional structure may allow $\hat f(z)$ to be chosen so that $R(z,\lambda)\hat f(z)$ has $\Ds$ as a factor even if $\hat f(z)$ does not.

\subsubsection{Reducibility of $\BB_{\!A}$ by symmetry}

This section is a generalization of a construction in~\cite{Shipman2014}.  A~finite symmetry of $A$ that commutes with $\ZZ^d$ yields reducibility of the Bloch variety.
Denote a free action $G\times W\to W$ of a finite group $G$ on $W$ by $(g,x)\mapsto g\cd x$.  Thus $\ZZ^d\times G$ acts on $\ZZ^d\times W$ and therefore also on $\ell^2(\VV)=\ell^2(\ZZ^d)\otimes\CC^W$~by
\begin{equation}
  [(n,g)f](m,y) \;=\; f(m\dotminus n,g^{-1}y),
\end{equation}
for all $m\in\ZZ^d,\,y\in W$ and $n\in\ZZ^d,\,g\in G$.  The vector space $\CC^W\!\!$, as a representation of~$G$, admits a decomposition
  $\CC^W = \bigoplus_{j\in[1,r]} X_j$,
in which $X_j$ is a direct sum of $s_j$ copies of one of the $m$ irreducible representations of~$G$.  This induces an orthogonal decomposition
\begin{equation}\label{symmdecomp}
  \ell^2(\VV) \;=\; \ell^2(\ZZ^d)\otimes\CC^W \;=\; \bigoplus_{j\in[1,r]}\!\! \ell^2(\ZZ^d)\otimes X_j.
\end{equation}

Now let $A$ commute also with the $G$ action.  Thus $A$ is invariant on each component of the decomposition (\ref{symmdecomp}) and may be further decomposable within each of the components $\ell^2(\ZZ^d)\otimes X_j$.  $A$ can be conceived as a graph operator as follows.  Let $W_j$ be an orthonormal basis for $X_j$ as a vector space over~$\CC$, and, for each $j\in[1,r]$, define a vertex set
\begin{equation}
  \VV_j \;=\; \ZZ^d\times W_j.
\end{equation}
The set $W_j$ serves as a fundamental domain of the $\ZZ^d$ action on $\VV_j$, and (\ref{symmdecomp}) becomes
\begin{equation}
  \ell^2(\VV) \;=\; \bigoplus_{j\in[1,r]} \ell^2(\VV_j).
\end{equation}
The operator $A$ is decomposed into its projections onto these subspaces, that is
  $A = \sum_{j\in[1,r]} A_j$,
where $A_j$ is the restriction of $A$ to~$\ell^2(\VV_j)$.  Under the Floquet transform, we obtain
  $\hat A(z) = \bigoplus \hat A_j(z)$,
and ultimately
\begin{equation}
  D(z,\lambda) \;=\; \prod_{j\in[1,r]}\! P_j(z,\lambda)^{s_j},
\end{equation}
in which $P_j(z,\lambda)^{s_j} := \det(\hat A_j(z)-\lambda)$.  Possible further factorizations of the $P_j$ yield the $\tilde D_j$ above.

\subsubsection{Generalized symmetries}\label{sec:gensym}
Begin with an operator $\mathring A$ on  $\ell^2(\mathring\VV)$, with dispersion function $\mathring D(z,\lambda)$.  Create a new vertex set consisting of $s$ copies (layers) of $\mathring\VV$, that is, $\VV=\sqcup_{j\in[1,s]}\mathring\VV$.  We will create a ``multilayer" operator $A$~on
\begin{equation}
  \ell^2(\VV) \;=\; \CC^s\otimes\ell^2(\mathring\VV).
\end{equation}
Let $\{P_j\}_{j\in[1,r]}$ ($r\leq s$) be orthogonal projectors onto subspaces of $\CC^s$.  These subspaces generalize the spaces $X_j$ above, as they are not necessarily coming from an underlying symmetry of~$W$.
Define $A$~by
\begin{equation}
  A \;=\; I_s \otimes \mathring A \,+\, \sum_{j\in[1,r]} P_j\otimes L_j \,=\, \sum_{j\in[1,r]} P_j\otimes (\mathring A+L_j),
\end{equation}
in which the $L_j$ are periodic graph operators on~$\mathring\VV$.  The image of the projector $P_j\otimes I$ is an invariant space of ``hybrid states" for $A$, and $A$ acts on this space by $A+L_j$, as if it were a ``single-layer" operator on~$\ell^2(\mathring\VV)$, possibly with multiplicity.  Now $D(z,\lambda)$ has factors $P_j(z,\lambda)=\det(\hat A(z)+\hat L_j(z)-\lambda I)$.

A simple case of this construction (see~\cite{Shipman2014}) occurs when all $m$ layers are coupled by one self-adjoint $m\times m$ matrix $K=\sum_{j\in[1,r]}\lambda_j P_j$ and $L_j=I$,
\begin{equation}\label{mcoupled}
  A \;=\; I_s\otimes \mathring A + K\otimes I  \;=\; \sum_{j\in[1,r]} P_j \otimes (\mathring A+\lambda_j I).
\end{equation}
The dispersion function is
  $D(z,\lambda) = \prod_{j\in[1,r]}\! \mathring D_j(z,\lambda-\lambda_j)^{s_j}$,
so $\sigma(A)$ is a union of shifts of $\sigma(\mathring A)$.  

\subsubsection{Example: AA-stacked bi-layer graphene}
The simplest model of AA-stacked graphene is two copies of single-layer graphene $H_0$ coupled by a $2\times2$ interlayer hermitian matrix~$\Gamma$.  Let $\VV_0$ be the vertex set for the single layer and $D_0(z,\lambda)$ its dispersion function.   Then the AA-stacked model $H_\text{AA}$ acts in $\CC^2\otimes\ell^2(\VV_0)$,
\begin{equation}
  H_\text{AA} = I_2\otimes H_0 + \Gamma\otimes I.
\end{equation}
Let $\Gamma=U\Lambda U^{-1}$ with $U$ unitary and $\Lambda=\diag(\mu_1,\mu_2)$.  Then $\mathcal{U}=U\otimes I$ block-diagonalizes $H_\text{AA}$, with the blocks being spectrally shifted copies $H_0+\mu_j I$ of the single layer.  Thus the dispersion function for $H_\text{AA}$ is reducible,
\begin{equation}
  D(z,\lambda) = D_0(z,\lambda+\mu_1)D_0(z,\lambda+\mu_2).
\end{equation}
This may be seen in the Bloch variety on the right in Figure~\ref{F:AA_and_AB} (on the next page).

\subsubsection{Reducibility of $\Phi_{\!A,\lambda}$ by contracting edges}
The coupling of multiple layers in \S\ref{sec:gensym} can be generalized to more elaborate coupling graphs, which results in the shifts $\lambda_j$ in (\ref{mcoupled}) being $\lambda$~dependent.  This has been worked out for quantum graphs, where the edges are endowed with an ordinary differential operator in~\cite{Shipman2019,BrownSchmidtShipmanWood2021} and then for discrete graphs in~\cite{Villalobos2024a}.

Two decoupled layers of $\mathring A$ on $\ell^2(\mathring\VV)$ yield the operator $I_2\otimes\mathring A$ on $\CC^2\otimes\ell^2(\mathring\VV)$, with vertex set $\mathring\VV\!\sqcup\!\mathring\VV=\{1,2\}\!\times\!\mathring\VV$.  Now, for each pair of ``aligned" vertices, say $(1,v)$ and $(2,v)$, create a vertex set $\VV_v$ containing these two vertices, and a self-adjoint coupling operator $K_v$ on~$\VV_v$, ensuring periodicity $K_{v\dotplus n}=K_v$.  Then merge these connecting operators with $I_2\otimes\mathring A$ to obtain a periodic ``bilayer" operator with vertex set $\sqcup_{v\in\mathring\VV}\VV_v$.

Consider now the equation $(A-\lambda)u=f$, where $f$ is supported on the vertices $\mathring\VV\!\sqcup\!\mathring\VV$.  This equation can be written equivalently by eliminating the extra vertices in $\VV_v$, for each~$v\in\mathring\VV$, by replacing $K_v$ with its Schur complement.  This is a $2\!\times\!2$ matrix indexed by the vertices $(1,v)$ and $(2,v)$ and is a rational function of~$\lambda$.  Explicitly, write the matrix for $K_v\!-\!\lambda$ in block form, with $(1,v)$ and $(2,v)$ coming first in the ordering,
\begin{equation}
  K_v \;=\; \mat{1.1}{A_v-\lambda}{B_v}{B_v^*}{C_v-\lambda}.
\end{equation}
The Schur complement is $S_v = A_v-\lambda-B_v(C_v-\lambda)^{-1}B_v^*$.  If the $K_v$ are the same for all $v\in\mathring\VV$, the resulting contracted operator has the form of (\ref{mcoupled}) with $m\!=\!2$ and $S_v$ taking the place of~$K$.  More generally, if the $K_v$ all commute with each other, then they admit common spectral projections $P_1$ and $P_2$, but the eigenvalues depend on~$v$ periodically; denote them by $\mu_j^v$ for $j\in\{1,2\}$ and $v\in\mathring\VV$.  The resulting contracted operator is $\lambda$ dependent,
\begin{equation}
  \tilde A(\lambda) \;=\; \sum_{j\in[1,2]} P_j \otimes \big(\mathring A + \sum_{v\in\mathring\VV} \mu_j^v(\lambda) \big),
\end{equation}
in which $\mu_j^v(\lambda)$ is treated as a function of~$v$ and acts as an onsite potential.  It generalizes $\lambda_j$ in (\ref{mcoupled}), which is independent of $\lambda$ and $v$.

The Fermi dispersion function can be factored as $D(z,\lambda) \;=\; D_1(z,\lambda)D_2(z,\lambda)$, with $D_j(z,\lambda)=\det(\hat{\mathring A}(z) + \diag_v\{\mu_j^v(\lambda)\})$.  Since these factors are polynomial in $z$ and rational functions of $\lambda$, the factorization yields reducibility of the Fermi variety, for each $\lambda$ that is not a pole, but not for the Bloch variety.

This construction can be done essentially verbatim with $m$ layers.

\subsubsection{Reducibility by reduction to one variable}\label{sec:types}
For certain multi-layer graphs not possessing any symmetry, the Fermi surface may still be reducible.  Another mechanism for this occurs when the dispersion function is a polynomial $P(\xi,\lambda)$ in a single composite momentum variable $\xi\!=\!g(z)\in\CC$ and energy~$\lambda$, with $g(z)\in\CC[z^\pm]$.  This means that the Bloch variety factors through the Riemann surface $\mathcal{R}\!=\!\{(\xi,\lambda)\in\CC^2 : P(\xi,\lambda)=0\}$.  For $\lambda_0\!\in\!\CC$, let $\mathcal{R}_{\lambda_0}$ denote the finite set of roots $\xi$ of $P(\xi,\lambda_0)$.
\begin{equation}
\renewcommand{\arraystretch}{}
\left.
  \begin{array}{ccccc}
  (\CC^\times)^d\times\CC & \overset{z\mapsto\xi}{\longrightarrow} &
  \CC^2 & \overset{P}{\longrightarrow} & \CC \\
  \BB & \longrightarrow & \mathcal{R} & \longrightarrow & 0 \\
  \Phi_{\lambda_0} & \longrightarrow & \mathcal{R}_{\lambda_0} & \longrightarrow & 0 \\
  \end{array}
\right.
\end{equation}
The Fermi surface at energy $\lambda_0$ is the zero set of
\begin{equation}
  P(\xi,\lambda_0) \;=\; \prod_j \big(\xi - \xi_j(\lambda_0)\big),
  \qquad \xi = g(z).
\end{equation}
In this way, the Fermi surface $\Phi_{\lambda_0}$ has components $\Phi^j_{\lambda_0}=\{z\in(\CC^\times)^d : g(z)=\xi_j(\lambda_0)\}$, which depend on $\lambda_0$ algebraically, through a multi-valued function on~$\mathcal{R}$.

Two types of multi-layer quantum graphs that are reducible by reduction to one variable are introduced in \cite{FisherLiShipman2021}, and the discrete graph version is treated in~\cite{Villalobos2024a}.  The first type relies on a calculus for computing the dispersion function for the joining of two periodic graphs by merging pairs of vertices, with one vertex per fundamental domain in each graph.  The second relies on the bipartiteness of the layers, each having $|W|=2$.  One deduces, for example, that very general stacking of multiple layers of graphene with arbitrary shifts, results in a Fermi surface with as many components as the number of layers.

\subsubsection{Example: AB-stacked bi-layer graphene}
Bi-layer graphene with one layer shifted relative to the other is called AB-stacked, or Bernal-stacked, graphene.  It lies within the class of both types discussed in \S\ref{sec:types}.  The Fermi surface for the Hamiltonian
\begin{equation}\label{AB}
 A(z) \;=\;
\renewcommand\arraystretch{1.0}
\left[\begin{array}{cc|cc}
\Delta & \zeta' & \gamma_4\zeta' & 0 \\ 
\zeta & \Delta & \gamma_1 & \gamma_4\zeta' \\\hline
\gamma_4\zeta & \gamma_1 & -\Delta & \zeta' \\
0  
& \gamma_4\zeta & \zeta & -\Delta
\end{array}
\right]\,.
\end{equation}
(written with respect to the basis (1A, 1B, 2A, 2B)) is reducible into two components, coming from the factorization $D(z,E)=D_1(z,E)D_2(z,E)$ in $\CC[z^\pm]$ for each~$E\in\CC$.  Each factor $D_\ell(z,E)$ is a polynomial in $\xi=\zeta\zeta'$, with $\zeta=1+z_1+z_2$ and $\zeta'=1+z_1^{-1}+z_2^{-1}$.  Physical implications of this reducibility are studied in~\cite{MassattShipmanVekhterWilson2025}.

Figure~\ref{F:AA_and_AB} shows Bloch varieties for both AB- and AA- stacked bi-layer graphene.
They are perturbations of the doubling of the Bloch variety of graphene (on the right in Equation~\eqref{Eq:BlochVars}).
\begin{figure}[htb]

  \centering
  \includegraphics[height=100pt]{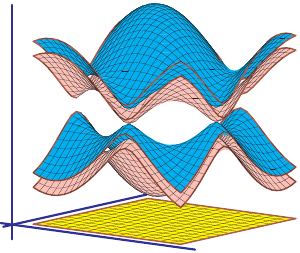}\qquad
  \includegraphics[height=120pt]{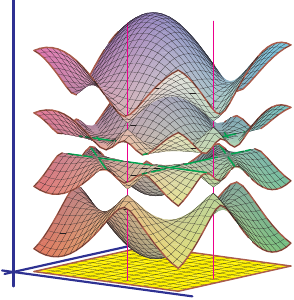}\qquad
  \includegraphics[height=120pt]{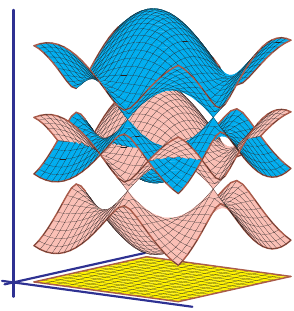}
\caption{Some Bloch varieties of AB- and AA- stacked bi-layer graphene.}\label{F:AA_and_AB}
\end{figure}
In the first, the Dirac points of graphene remain double points, but they are smoothed.
For this, the operator is~\eqref{AB} with $\Delta=\gamma_4=0$ and $\gamma_1=1/2$.
The second is~\eqref{AB} with $\Delta=1$, $\gamma_4=0$, and $\gamma_1=2/3$.
In it, the Dirac points are smoothed and slightly pulled apart (the faint vertical lines are to help show this).
Curiously, the Fermi curve at the endpoints of the band gap are reducible curves of critical points (see
\S\ref{S:SENDC}).
The third is AA stacked bi-layer graphene, and it consists of two shifted copies of the Bloch variety of graphene.

\subsubsection{Defect modes in the continuum}
A consequence of reducibility of the Fermi surface is the ability to create a defect state at an energy in the continuous spectrum supported by a local defect, where the state is exponentially decaying and not compactly supported.  This is essentially a converse of \cite[Theorem 6]{KuchmentVainberg2006}.  The equation is
\begin{equation}\label{bseqn}
  (A+V-\lambda)u \;=\; 0,
\end{equation}
in which $V$ is a potential operator with finite support, $u\!\in\!\ell^2(\VV)$, and $\lambda\!\in\!\sigma(A)$.  

To create such a defect mode, first solve the local-source equation $(A-\lambda)u=f$ (\ref{feqn}).
Consider the example of AB-stacked graphene.  Recall from \S\ref{sec:response} that, to ensure $\lambda\!\in\!\sigma(A)$, $\lambda$ has to be chosen such that one of the factors $D_\ell(z,\lambda)$, say $D_2$, vanishes at some $z\in\TT^2$.  To satisfy $u\!\in\!\ell^2(\VV)$, the vector $R(z,\lambda)\hat f(z)$ must have $D_2(z,\lambda)$ as a factor in each component.  In order to satisfy the additional requirement that $u$ does not have compact support, $D_1$ must not vanish anywhere on $\TT^2$ and $R(z,\lambda)\hat f(z)$ cannot have $D_1$ as a factor in each component.  Thus $\hat u$ is a smooth rational function on $\TT^2$ so that $u$ is properly exponentially decaying.
Then put $f=-Vu$ to obtain~(\ref{bseqn}).  Since $f$ has compact support, this amounts to a finite-dimensional system for~$V$.  If $u$ vanishes at some $x\in\VV$, then $V$ cannot be purely on-site; however generally one can find a local potential operator $V$ that works, as demonstrated in~\cite{FisherLiShipman2021,MassattShipmanVekhterWilson2025}.
Embedded eigenvalues created by non-compactly supported potentials require other techniques; see
\cite{Liu2018,JudgeNabokoWood2018} for results in one dimension.

\subsubsection{Localization of the defect}
Let us continue the AB-stacked graphene example.  As described at the end of \S\ref{sec:response}, 
having a factor of $D_2(z,E)$ in each component of the vector $R(z,\lambda)\hat f(z)$ can be achieved na\"ively if $\hat f(z)\!=\!D_2(z,\lambda)\hat f_0(z)$ for some $f_0(z)\in\CC[z^\pm]^W$.  This means that $f$ and therefore $V$ generically would be nonzero on 28 vertices ($D_2$ has seven monomial terms and there are four vertices in a fundamental domain~$W$).
However, it is desirable to seek $\hat f(z)$ with the minimal number of monomial terms in all components.  This corresponds to a source $f$ having minimal support.

For the Hamiltonian~\ref{AB}, it is shown in \cite{MassattShipmanVekhterWilson2025} there is a non-obvious matrix factorization $U(\hat A(z)-\lambda I)L = \Lambda$, with $U$ and $L$ being block upper and lower matrices in an appropriate basis, and $\Lambda$ being essentially a $2\times 2$ matrix with a special form.  Remarkably, the eigenvectors $\phi_\ell$ of $\Lambda$ are independent of $z$ and its eigenvalues happen to be~$D_\ell$.  Furthermore, $\phi_\ell$ and $U\phi_\ell$ have nonzero components only in the aligned pair of A and B vertices of~$W$.  Therefore, taking $\hat f=\phi_1$ results in $\hat u =(\hat A(z)-\lambda I)^{-1}\hat f=L\Lambda^{-1}U\hat f$ having poles on $D_1(z,\lambda)=0$, which by assumption does not intersect~$\TT^2$.  Since $\hat f$ is independent of $z$ and has two non-zero components, $f$ is supported on two vertices.  The details are in \cite{MassattShipmanVekhterWilson2025}, including a formula for the values of $V$ on the two aligned AB vertices.

\subsubsection{Irreducible Fermi varieties}
Although na\"ively the Fermi variety of a discrete periodic operator (with $d\geq2$) should be generically irreducible, proving irreducibility is not straightforward.  Compared with proving reducibility, the techniques are different; this is because reducibility arises constructively by creating graphs, typically through multiple layers, that are explicitly designed to be algebraically reducible.

Although there may be no crisp way to define when a periodic graph has multiple layers, for $d=2$ it is reasonable to consider a planar graph to be a single layer.  When $|W|=2$ and the graph is planar, reducibility of the Fermi variety occurs only for the tetrakis grid, and positivity of the matrix elements of the operator always prohibits reducibility.  This is proved by computational means~\cite{LiShipman2020}.

Liu~\cite{Liu2020} proved irreducibility of the discrete Laplacian plus a periodic potential at every energy (except one when $d=2$), from which is inferred generic reduction of the dimension of the Fermi variety at band edges.
More generally, let $A$ be a $\ZZ^d$ periodic operator on $\ZZ^d$ so that $\hat A(z)$ is scalar, and let $V:\ZZ^d\to\CC$ be a potential function that is periodic with respect to a sublattice $q_1\ZZ\times\dots\times q_d\ZZ$.  Irreducibility of the Bloch variety, but not necessarily the Fermi varieties, of $A+V$ occurs under a condition connecting $\hat A(z)$ and the periods~$q_j$~\cite{FillmanLiuMatos2022}.  
By bounding the number of irreducible components in terms of certain asymptotics of $D(z,\lambda)$, irreducibility can be established for a wider swath of examples~\cite{FillmanLiuMatos2024a}.

\subsubsection{Isospectrality}
There are some results about the information that the Bloch and Fermi varieties contain.
For Schr\"odinger operators on $\ZZ^d$ equal to the discrete Laplacian plus a potential that is periodic with a rectangular fundamental domain,
the density of states determines the Bloch variety \cite{BattigKnorrerTrubowitz1991}.  
Liu introduces Fermi isospectrality~\cite{Liu2023a,Liu2024a} of two such potentials when the associated Schr\"odinger operators have the same Fermi variety for some energy~$\lambda$.
For $d\geq3$, if two potentials are Fermi-isospectral either both are additively separable or neither is.
In the former case, the one-variable summands are essentially Fermi-isospectral, which also holds for $d=2$.
There exist complex potentials that are Floquet-isospectral to the zero potential~\cite{FLMPRTTZ}.

\section{Nondegeneracy of band edges and beyond}
A property of the spectrum revealed by the Bloch variety is its behavior near edges of spectral bands.
The  spectral edges conjecture about this behavior  leads to a deeper study of Bloch varieties.

\subsection{Spectral edges nondegeneracy conjecture}\label{S:SENDC}

The real Bloch variety $\RR\BB_{\!A}\subset\TT^d\times\RR$ of a periodic graph
operator $A$ encodes the relation between the characters $z\in\TT^d$ of the $\ZZ^d$-action and the spectrum $\lambda\in\RR$
of the operator ({\it cf.} \S\ref{sec:BlochFermi}).
Its projection to $\RR$ is the spectrum $\sigma(A)$ of $A$ which consists of intervals (spectral bands) each of which is an
image of one of the $|W|$ branches (band functions) of $\RR\BB_{\!A}$ over $\TT^d$.
Endpoints (spectral edges) of spectral bands are the images of extrema of the corresponding band function of $\RR\BB_{\!A}$.
Many important notions in physics, including effective mass in solid state physics, the Liouville property, Green's
function asymptotics, Anderson localization, and homogenization, require 
that those extrema are nondegenerate in that the Hessian matrix has full rank.
Kuchment noted that this assumption is largely unproven and posed the
{\itshape spectral edges conjecture}~\cite[Conj.\ 5.25]{Kuchment2016}.
This posits that for generic parameters (potential and edge labels), each extreme value is attained by a single band function and 
the extrema are all isolated and nondegenerate.
It is stated and disucssed when $d=2$ for continuous operators in~\cite{ParnovskiShterenber2017a}.

While posed for all periodic operators (discrete and {\itshape continuous}), it remains largely open, even for discrete
operators.
In this setting, ``generic'' means avoiding an algebraic subset of the parameters.
This conjecture does not always hold.
We have already seen an instgance of this in Figure~\ref{F:AA_and_AB}.
The simplest nontrivial counterexample is due to Filonov and Kachkovskiy in~\cite[\S7]{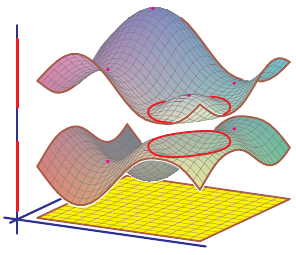}.
We give a slightly more involed example.
Figure~\ref{fig:FiK} shows a $\ZZ^2$-periodic graph with five (orbits of) edges, a labeling, and a Bloch variety when the
\begin{figure}[htb]
  \centering
   \begin{picture}(122,92)
     \put(0,0){\includegraphics{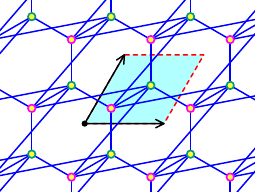}}
   \end{picture}
   \qquad
   \begin{picture}(153,108)(-23,-9)
     \put(-22,-15){\includegraphics{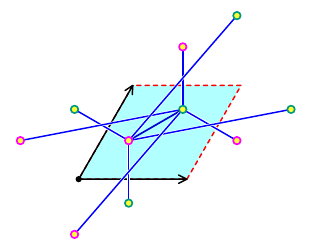}}
     
    \thicklines 
      \put(66.7,35.7){{\color{white}\vector(-1,1){10}}} \put(67.3,36.3){{\color{white}\vector(-1,1){10}}}
      \put(67.9,35.1){{\color{white}\vector(-1,1){11}}}

     \thinlines
     \put(32,32){\small$u$}   \put(100,18){\small$u\dotplus x$}  \put(-22.5,29){\small$u\dotminus x$}
                              \put( 33,81){\small$u\dotplus y$}  \put( -9,-9){\small$u\dotminus y$}
     \put(105,25){\vector(-1,1){10}}     \put(50,83){\vector(1,0){12}}
     \put(69,54){\small$v$}   \put( -9,51.5){\small$v\dotminus x$}  \put(110.5, 57){\small$v\dotplus x$}
                              \put( 30,-2){\small$v\dotminus y$}  \put(95, 92){\small$v\dotplus y$}
     \put(59,22){\small$x$}
     \put(32,60){\small$y$}


     \put(67.5,31){\small{\color{blue}$a$}}   \put(67.6,35.4){\vector(-1,1){10}}
     \put(42,13){\small{\color{blue}$c$}}   \put(59,72 ){\small{\color{blue}$c$}}
     \put(26,14){\small{\color{blue}$e$}}   \put(78,76  ){\small{\color{blue}$e$}}
     \put(21,50){\small{\color{blue}$b$}}   \put(83,32 ){\small{\color{blue}$b$}}
     \put(14,34){\small{\color{blue}$d$}}   \put(93,50.5){\small{\color{blue}$d$}}
   \end{picture}
   \qquad
   \includegraphics[height=100pt]{FiK}
   \caption{Filonov-Kachkovskiy Bloch variety with a curve of critical points.}
  \label{fig:FiK}
\end{figure}
potential is not constant, but $b=d$ and $c=e$.
This has eight isolated critical points and two curves of critical points (the corresponding Fermi variety is
defined by $a+b(x+x^{-1}) + c(y+y^{-1})$), each lying over the  edges of the spectral gap.
In particular, for pure Schr\"odinger operators ($a=\dotsb=e=1$) the spectral edges nondegeneracy conjecture fails.
When $b\neq d$ or $c\neq e$, all critical points are isolated.

About the same time as~\cite{FiK}, Parnovski and Shterenberg found higher-dimensional examples~\cite[Rem.\ 4.1]{ParnovskiShterenber2017a},
as did Filonov and Kachkovskiy~\cite{FilonovKachkovski2024a}.
Flat bands (e.g.\ the Lieb lattice~\eqref{Eq:Lieb}) give other examples with non-isolated critical points.
These examples all involve algebraic subsets of the full set of parameters.
For general values of the parameters, all critical points are isolated and nondegenerate.
A more subtle counterexample is the middle Bloch variety of~\eqref{Eq:BlochVars}---over each edge of the gap there are two
nondegenerate critical points.
(A version of the spectral edges conjecture posits that each extrema is attained at a unique point on the Bloch variety.)
This occurs whenever the parameters $a,b,c$ (see~\eqref{Eq:hexagonal_lattice}) are the sides of a triangle~\cite{BlochDiscriminants}.

For a positive result, Liu~\cite{Liu22} proved that the extrema are isolated for the Schr\"{o}dinger operator acting on the
square lattice.

\subsection{Nondegeneracy of critical points}
The Spectral Edges Conjecture is discussed in~\cite{DoKuchmentSottile2019a}, which introduces a
computational method to study it.
Extrema of bands of the real Bloch variety are some (but not all) of the critical points of the coordinate function $\lambda$ on
the complex Bloch variety $\BB_A$.
A strengthening of the spectral edges conjecture is the {\itshape nondegeneracy conjecture}---that every critical point
of $\lambda$ on $\BB_A$ is nondegenerate.
We have the following dichotomy.

\begin{theorem}[{\protect\cite[Thm.~12]{DoKuchmentSottile2019a}}]
  For a given graph $\Gamma$, there is a dense open subset $U$ in the space of parameters for $\Gamma$ with the
  property that either all operators on $\Gamma$ with parameters in $U$ have all critical points nondegenerate,
  or every such operator has degenerate critical points.
\end{theorem}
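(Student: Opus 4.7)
The plan is to recognize the condition ``the operator has a degenerate critical point of $\lambda$ on $\BB_{\!A}$'' as a constructible condition on the parameter $p$, and then invoke the elementary fact that a constructible subset of an irreducible variety either contains a dense Zariski open subset or is contained in a proper closed subvariety.

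First, I would identify the parameter space $P$: it consists of a complex potential value at each vertex of $W$ together with an edge label for each $\ZZ^d$-orbit of edges, so $P$ is an affine space $\CC^N$ and hence irreducible. The dispersion function $D(z,\lambda) = \det(\hat A(z)-\lambda I)$ is a Laurent polynomial in $z$ and $\lambda$ whose coefficients are polynomials in $p\in P$. Next, I would express nondegeneracy of a critical point algebraically. At a smooth point of $\BB_{\!A}$, a critical point of the coordinate function $\lambda$ on $\BB_{\!A}$ satisfies $D = 0$ and $\partial_{z_j}D = 0$ for $j=1,\dots,d$, while $\partial_\lambda D\neq 0$. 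The implicit function theorem then expresses $\lambda$ locally as a function of $z$ whose Hessian at the critical point is proportional to the matrix $(\partial^2_{z_iz_j} D)$, so nondegeneracy is equivalent to $\det(\partial^2_{z_iz_j} D)\neq 0$ there. Thus the incidence variety
\begin{equation*}
  Z \;=\; \bigl\{(p,z,\lambda)\in P\times(\CC^\times)^d\times\CC \;:\; D=\partial_{z_1}D=\cdots=\partial_{z_d}D=0,\; \det(\partial^2_{z_iz_j}D)=0,\; \partial_\lambda D\neq 0\bigr\}
\end{equation*}
is locally closed (hence constructible), and its fiber over $p$ consists of the degenerate critical points of $\lambda$ on the smooth locus of $\BB_{\!A_p}$.

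Then I would apply Chevalley's theorem to the projection $\pi\colon Z\to P$: the image $B=\pi(Z)$ is constructible in the irreducible variety $P$. Either the Zariski closure $\overline{B}$ equals $P$, in which case $B$ contains a dense Zariski open subset $U$ and every operator with parameters in $U$ has a degenerate critical point; or $\overline{B}$ is a proper closed subvariety, in which case $U\vcentcolon= P\setminus\overline{B}$ is a dense open subset on which every critical point is nondegenerate. Either alternative yields the desired dichotomy.

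The main obstacle will be accommodating critical points that lie at singular points of $\BB_{\!A}$---for instance the Dirac points of graphene, the critical curves in Figure~\ref{fig:FiK}, or the flat-band strata produced by a factor $F(\lambda)$ of $D$ as in~\eqref{Eq:Lieb}. To handle them one enlarges $Z$ by adjoining the singular stratum $\{D = 0,\ \partial_\lambda D = 0,\ \partial_{z_j}D = 0\ \forall j\}$ together with a suitable algebraic description of ``degenerate'' at such points (e.g.\ drop of rank of the Jacobian of the defining ideal); the augmented $Z$ remains constructible, and the Chevalley argument proceeds unchanged. A secondary subtlety is that for graphs whose flat bands persist for every $p\in P$ (as in certain cases of~\eqref{Eq:Lieb} and Figure~\ref{F:AA_and_AB}), the degenerate alternative is forced, consistent with the dichotomy.
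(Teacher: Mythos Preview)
Your proposal is correct and follows essentially the same approach as the paper: both arguments observe that the critical point equations together with the vanishing of the Hessian determinant cut out a constructible locus, project it to the parameter space (you invoke Chevalley explicitly; the paper simply asserts constructibility of the image $V$), and then apply the standard dichotomy for constructible subsets of an irreducible variety. Your treatment is somewhat more careful---you spell out the implicit-function-theorem computation relating nondegeneracy to $\det(\partial^2_{z_iz_j}D)$, and you flag the singular-locus issue that the paper's sketch passes over---but the underlying idea is the same.
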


This is because there is a system of polynomial equations~\eqref{Eq:CPE} involving the parameters that define the critical points
(the critical point equations) and a further equation (determinant of the Hessian) that defines the degenerate critical points.
Consequently, the set $V$ of of parameters whose corresponding Bloch variety has a degenerate critical point
is constructable.
If $V$ is not dense, then $U$ is an open subset of its complement and operators in $U$ have all critical points nondegenerate.
If $V$ is dense, then  $U$ is an open subset of $V$ and generic operators have degenerate critical points.
 
\subsection{Critical point degree}

The algebraic nature of the set of critical points and standard facts in algebraic geometry imply that given a graph
$\Gamma$, there is a number $N(\Gamma)$ and a open dense subset $U$ of the parameters for $\Gamma$ with the following properties:
Every operator on $\Gamma$ has at most $N(\Gamma)$ isolate critical points,
and if the operator has parameters from $U$, then it has has exactly $N(\Gamma)$ critical points.
This number $N(\Gamma)$ is called the {\itshape critical point degree} of~$\Gamma$~\cite{FRS}.

As explained in~\cite[\S5]{DoKuchmentSottile2019a}, if there exists one operator on $\Gamma$ with $N(\Gamma)$ nondegenerate
critical points, then the nondegeneracy conjecture holds for $\Gamma$.
Consequently, if $N(\Gamma)$ is known, then a single computation can  establish the nondegeneracy conjecture for $\Gamma$.
This is used to prove~\cite[Thm.~16]{DoKuchmentSottile2019a} and to establish the nondegeneracy conjecture for over $2^{19}$ graphs
in~\cite{FaustSottile2023a}.

The paper~\cite{FaustSottile2023a} studies the critical points of a periodic operator, giving bounds and conditions which imply the
bounds are met.
A standard formulation of critical points of implicit functions from algebraic optimization gives the following system of equations
for the critical points of $\lambda$ on the Bloch variety,
 \begin{equation}\label{Eq:CPE}
  D(z,\lambda) \;=\;
  z_1 \frac{\partial D}{\partial z_1}(z,\lambda)
  \;=\; \dotsb \;=\;
  z_d \frac{\partial D}{\partial z_d}(z,\lambda) \;=\; 0\,.
 \end{equation}
 (As $z\in(\CC^\times)^d$, multiplying by $z_i$ does not add solutions.)
 A monomial $z^n\lambda^j$ is an eigenvector for the Euler operator $z_i\frac{\partial }{\partial z_i}$ with eigenvalue $n_i$.
 Thus, the exponents $(n,j)$ of monomials $z^n\lambda^j$ that occur in each of the
 {\itshape Critical Point Equations}~\eqref{Eq:CPE} are a subset of those 
 that occur in $D(z,\lambda)$, called its {\itshape support}.

\subsection{Toric compactification}
Let $\calN(A)\subset\RR^{d+1}$ be the convex hull of the support of the dispersion function $D(z,\lambda)$ for the operator $A$.
This is the Newton polytope of $D(z,\lambda)$.
Figure~\ref{Fig:polytopes} shows Newton polytopes of general operators on the hexagonal and Lieb lattices, the graph of Figure~\ref{fig:FiK},
and another graph with $|W|=3$.
\begin{figure}[htb]
 \centering
   \includegraphics[height=80pt]{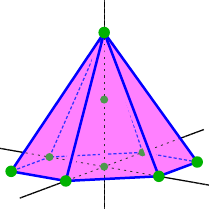}\quad 
   \includegraphics[height=110pt]{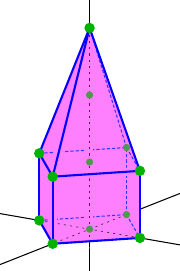}\quad 
   \includegraphics[height=80pt]{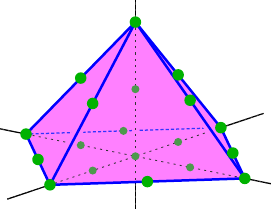}\quad 
   \includegraphics[height=110pt]{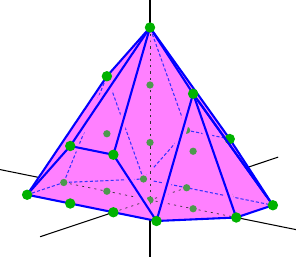}
   \caption{Four Newton polytopes}
   \label{Fig:polytopes}
 \end{figure}
A classical result of Kushnirenko~\cite{Kushnirenko} gives the following bound for the number of critical points.

\begin{theorem}[{\protect\cite[Cor.~2.5]{FaustSottile2023a}}]\label{Th:FS_A}
  The number of isolated critical points of the function $\lambda$ on the Bloch variety $\BB_A$ is at most
  $(d+1)!\vol(\calN(A))$.
\end{theorem}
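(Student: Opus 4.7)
The plan is to apply the Bernstein--Kushnirenko bound to the Critical Point Equations~\eqref{Eq:CPE}. These form a system of $d+1$ Laurent polynomial equations in the $d+1$ variables $(z_1,\dots,z_d,\lambda)$, and their isolated simultaneous zeros in $(\CC^\times)^d\times\CC$ are precisely the isolated critical points of $\lambda$ on $\BB_A$. So the bound on isolated solutions coming from the Newton polytopes of the equations will bound the isolated critical points.

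The first step is to check that each equation in~\eqref{Eq:CPE} has Newton polytope contained in $\calN(A)$. This is immediate for $D(z,\lambda)$ itself. For the remaining $d$ equations, note that $z_i\tfrac{\partial}{\partial z_i}$ is the Euler operator with eigenvalue $n_i$ on the monomial $z^n\lambda^j$, as remarked in the paragraph preceding~\eqref{Eq:CPE}. Hence the support of $z_i\tfrac{\partial D}{\partial z_i}$ is a subset of the support of $D$ (those $(n,j)$ with $n_i\neq 0$), so its Newton polytope sits inside $\calN(A)$. Kushnirenko's theorem~\cite{Kushnirenko}, together with the monotonicity of mixed volumes, then implies that the number of isolated common zeros of the system in $(\CC^\times)^{d+1}$ is at most $(d+1)!\,\vol(\calN(A))$.

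The main obstacle is that the critical points live in $(\CC^\times)^d\times\CC$, not in the full torus $(\CC^\times)^{d+1}$: the Kushnirenko bound a priori misses any critical point with $\lambda=0$. I would handle this by translation. Replace $\lambda$ by $\lambda+c$ for a generic $c\in\CC$; this is a $\CC$-linear change of coordinates on $(\CC^\times)^d\times\CC$ that preserves $\BB_A$ up to translation and therefore preserves the set of isolated critical points of $\lambda$ (the critical locus of $\lambda+c$ coincides with that of $\lambda$). For generic $c$ no critical point of the translated system has $\lambda$-coordinate zero, so all isolated critical points lie in $(\CC^\times)^{d+1}$. The translation only affects coefficients of monomials $z^n\lambda^j$ with $j$ less than the maximum, so the support of $D(z,\lambda+c)$ is still contained in $\calN(A)$, and Kushnirenko gives the same bound $(d+1)!\vol(\calN(A))$. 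An alternative framing, closer to the title of the section, is to pass directly to the toric compactification $X_{\calN(A)}$ and bound the intersection number there, noting that critical points in the affine chart form a subset of the toric intersection cycle.
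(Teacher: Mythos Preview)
Your overall strategy---Kushnirenko applied to the Critical Point Equations~\eqref{Eq:CPE}, together with the Euler-operator observation that each equation has support inside that of $D$---is exactly the mechanism behind the cited Corollary, and you are right to flag the $\lambda=0$ issue. The translation fix, however, does not work as written. Your claim that the support of $D(z,\lambda+c)$ remains in $\calN(A)$ is equivalent to $\calN(A)$ being closed under lowering the $\lambda$-exponent, and this fails in general. For a self-adjoint example with $d=1$, take $\hat A(z)=\left(\begin{smallmatrix}z+z^{-1}&1\\1&0\end{smallmatrix}\right)$, so that $D(z,\lambda)=\lambda^2-(z+z^{-1})\lambda-1$ and $\calN(A)=\mathrm{conv}\{(0,0),(1,1),(0,2),(-1,1)\}$. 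After the shift, the term $-c(z+z^{-1})$ contributes exponents $(\pm1,0)$, which lie strictly outside this square. Thus the translated system has a larger Newton polytope, and Kushnirenko applied to it yields only a bound strictly weaker than $(d{+}1)!\,\vol(\calN(A))$; your argument proves a correct inequality but not the one in the theorem.

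The alternative you sketch in your last sentence is the correct route and is what the paper and~\cite{FaustSottile2023a} actually use. The $d{+}1$ polynomials in~\eqref{Eq:CPE}, having support in $\calN(A)$, extend to sections of the line bundle associated to $\calN(A)$ on $X_{\calN(A)}$, whose top self-intersection is $(d{+}1)!\,\vol(\calN(A))$. The hyperplane $\{\lambda=0\}$ corresponds to the base face of $\calN(A)$, so isolated critical points with $\lambda=0$ appear among the isolated points of the intersection cycle on $X_{\calN(A)}$ rather than escaping the count. This is also exactly why Theorem~\ref{Th:NP} is phrased as it is: equality in the bound fails precisely when some facial subsystem on a \emph{non-base} face has solutions, the base face already being accounted for.
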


An analysis of the proof (or of~\cite[Thm.~B]{Bernstein}) gives conditions for this bound to be sharp.
Given a face $F$ of the Newton polytope $\calN(A)$, the (sum of) the terms of $D(z,\lambda)$ whose monomials lie on $F$ is the
{\itshape facial form} $D_F(z,\lambda)$---this is a strict generalization of the notion of terms of highest degree.
Similarly, the {\itshape facial subsystem} $\mbox{(CPE)}_F$ of the Critical Point Equations~\eqref{Eq:CPE} is the
collection of facial forms of the equations in~\eqref{Eq:CPE}.
The following is a consequence of \cite[Cor.~3.5]{FaustSottile2023a}.

\begin{theorem}
  \label{Th:NP}
  The inequality in  Theorem~\ref{Th:FS_A} is an equality if and only if for every face $F$ of $\calN(A)$ that is not
  its base, the facial subsystem  $\mbox{(CPE)}_F$ has no solutions.
\end{theorem}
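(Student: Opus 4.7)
\ The plan is to deduce the equality characterization from the Bernstein--Kushnirenko--Khovanskii (BKK) theorem applied to the system (CPE), using the toric compactification of $(\CC^\times)^d\times\CC$ associated to $\calN(A)$. Theorem~\ref{Th:FS_A} itself is the Kushnirenko bound: each of the $d{+}1$ equations in \eqref{Eq:CPE} has Newton polytope contained in $\calN(A)$ (note that multiplying by $z_i$ and differentiating preserves the support of a Laurent polynomial), so the common bound from the mixed volume of $d{+}1$ copies of $\calN(A)$ is $(d{+}1)!\vol(\calN(A))$. What remains is to characterize when this bound is attained.

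My first step would be to work in the projective toric variety $X_{\calN(A)}$ and invoke the orbit--cone correspondence: for each face $F\subseteq\calN(A)$, there is a torus orbit $O_F\subset X_{\calN(A)}$ on which the restriction of each equation of (CPE) is, up to a monomial factor, its facial form along $F$. The total scheme-theoretic intersection of the $d{+}1$ hypersurfaces in $X_{\calN(A)}$ consists of $(d{+}1)!\vol(\calN(A))$ points counted with multiplicity, partitioned by orbit. A useful observation that makes the identification of the facial subsystems routine is that the Euler operator $z_i\,\partial/\partial z_i$ commutes with the operation of taking initial forms, so $(\mbox{CPE})_F$ is literally the initial-form system along $F$.

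The key step is to determine which orbits of $X_{\calN(A)}$ lie inside the affine ambient space $(\CC^\times)^d\times\CC$ of $\BB_A$. The base face $F_0$ of $\calN(A)$ is the face of minimal $\lambda$-coordinate; under the orbit--cone correspondence its orbit is $O_{F_0}=(\CC^\times)^d\times\{0\}$, which sits inside $(\CC^\times)^d\times\CC$. Together with the dense orbit $(\CC^\times)^{d+1}$, these are the only orbits of $X_{\calN(A)}$ contained in $(\CC^\times)^d\times\CC$; for every other face $F$, the orbit $O_F$ lies strictly at toric infinity (some $z_i\in\{0,\infty\}$, or $\lambda=\infty$). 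Hence the number of isolated critical points of $\lambda$ on $\BB_A$ equals $(d{+}1)!\vol(\calN(A))$ minus the total multiplicity of intersection points on the non-base orbits, and equality in Theorem~\ref{Th:FS_A} holds if and only if $(\mbox{CPE})_F$ has no torus solutions for every face $F\neq F_0$.

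The main obstacle is the toric-geometric bookkeeping: confirming that $O_{F_0}$ really is $(\CC^\times)^d\times\{0\}$ rather than a boundary component at infinity, and verifying that each non-base orbit's excess-intersection contribution decomposes cleanly into solutions of $(\mbox{CPE})_F$ in $O_F$ with the expected multiplicity and without over-counting. These are standard consequences of toric intersection theory in the spirit of~\cite[Thm.~B]{Bernstein}; the theorem is in fact quoted directly from~\cite[Cor.~3.5]{FaustSottile2023a}, so the remaining work is just to specialize that corollary to the polytope $\calN(A)$ and the system (CPE).
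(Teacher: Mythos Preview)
Your proposal is correct and takes essentially the same approach as the paper, which simply records the theorem as a consequence of \cite[Cor.~3.5]{FaustSottile2023a}; you have unpacked the toric-compactification argument behind that citation (intersection number $(d{+}1)!\vol(\calN(A))$ on $X_{\calN(A)}$, distributed across orbits, with only the dense orbit and the base-face orbit lying in $(\CC^\times)^d\times\CC$), which is indeed the mechanism in~\cite{FaustSottile2023a}. Your observation that the Euler operators $z_i\partial/\partial z_i$ preserve support---so that $(\mbox{CPE})_F$ coincides with the facial system of~\eqref{Eq:CPE}---is exactly the point that makes the specialization go through.
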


Solutions to a facial subsystem $\mbox{(CPE)}_F$ are {\itshape asymptotic critical points}.
These are studied in~\cite{FRS, FaustSottile2023a}.
If a face $F$ is vertical (as in the second and fourth polytopes in Figure~\ref{Fig:polytopes}, then $\mbox{(CPE)}_F$ has solutions.
For a face $F$ that is not vertical, there are asymptotic critical points if and only if the variety
defined by $D_F(z,\lambda)$ is singular.
In~\cite{FRS} such asymptotic critical points for general operators on a graph $\Gamma$ are shown to arise from structural
properties of $\Gamma$.

\subsection{More on toric compactification}

The introduction and use of facial forms and subsystems in~\cite{FRS, FaustSottile2023a}, as well as the use of the component of
$D(z,\lambda)$ of lowest degree in~\cite{FillmanLiuMatos2022}, are all algebraic manifestations of a natural toric compactification
of $\BB_A\subset(\CC^\times)^d\times\CC$.
This notion was introduced by Gieseker, Kn\"orrer, and Trubowitz~\cite{GiesekerKnorrerTrubowitz1993} (see also~\cite{Peters1990a})
in their study of the square lattice $\ZZ^2$ under the free action of $a\ZZ\oplus b\ZZ$, and extended by
B\"attig~\cite{Battig1988,Battig1992}.
We sketch a modern view developed in~\cite{FaustLopezShipmanSottile}.

Given a polytope $\calN\subset\RR^r$ with vertices in $\ZZ^r$, there is a projective toric variety~\cite{CLS} $X_\calN$ lying in
a projective space $\PP(\calN)$ whose coordinates correspond to the integer points in $\calA\vcentcolon=\calN\cap\ZZ^r$.
(The space $\PP(\calN)$ is the quotient of $\CC^\calA\smallsetminus\{0\}$  by scalars.
It is a compact complex manifold.)
There is a map $\varphi\colon(\CC^\times)^r\to\PP(\calN)$ that sends a point $x\in(\CC^\times)^r$ to the vector
$(x^a\mid a\in\calA)$ of monomials, and $X_\calN$ is the closure of its image.
This has the important geometric consequence (which is the idea behind Kushnirenko's Theorem) that under $\varphi$, linear functions
on $\PP(\calN)$ correspond to polynomials on $(\CC^\times)^r$ with support in $\calN$, and the degree of $X_\calN$ is
$r!\vol(\calN)$.
Lastly, the difference $X_\calN\smallsetminus\varphi((\CC^\times)^r)$ is a union of toric subvarieties $X_F$,
one for each face $F$ of~$\calN$.

In our context, let $\overline{\BB_A}$ be the closure of $\varphi(\BB_A)$ in $X_{\calN(A)}$.
Then for each face $F$ of $\calN(A)$,  the facial form $D_F(z,\lambda)$ defines $\overline{\BB_A}\cap X_F$,
which we regard as the asymptotic Bloch variety along $X_F$.
Similarly, asymptotic critical points are those that lie along some $X_F$, for $F$ not the base of $\calN(A)$
(the base corresponds to $\lambda=0$).

A first step to generalize~\cite{Battig1988,Battig1992,GiesekerKnorrerTrubowitz1993} is to extend the vector bundle $\scrE$
to $X_{\calN(A)}$---it remains a trivial bundle.
The endomorphism $\hat{A}(z)-\lambda I_W$ similarly extends to the bundle $\scrE$ over $X_{\calN(A)}$ with kernel the Floquet sheaf
$\scrF$ on $X_{\calN(A)}$ whose support is the compactified Bloch variety  $\overline{\BB_A}$.
This may always be done.

In his work on the square lattice, B\"attig~\cite{Battig1988,Battig1992} does more.
For every face $F$ of $\calN(A)$ he associates a periodic, labeled, directed graph $\Gamma_F$  whose operator $A_F$ has Bloch variety
equal to $\overline{\BB_A}\cap X_F$.
The paper~\cite{FaustLopezShipmanSottile} investigates to what extent this may be done for a general graph $\Gamma$.
When $\calN(A)$ is the pyramid $|W|\cdot P$, where $P$ is the convex hull of all entries in the matrix  $\hat{A}(z)-\lambda I_W$,
then the full package of B\"attig---asymptotic spectral problems for each face of $\calN(A)$---holds.
Only the third Newton polytope in Figure~\ref{Fig:polytopes} has this form.

\vspace{3ex}

\noindent
{\bfseries Acknowledgement.} This material is based upon work supported by the National Science Foundation under Grant No. DMS-2206037 (SPS).


\end{document}